\newcommand{\BZ}{{\mathbb{Z}}}
\newcommand{\BR}{{\mathbb{R}}}
\newcommand{\BC}{{\mathbb{C}}}
\newcommand{\BQ}{{\mathbb{Q}}}
\newcommand{\BG}{{\mathbb{G}}}
\newcommand{\BH}{{\mathbb{H}}}
\newcommand{\gD}{\Delta}
\newcommand{\gC}{\Gamma}
\newcommand{\gc}{\gamma}
\newcommand{\gS}{\Sigma}
\newcommand{\gO}{\Omega}
\newcommand{\gep}{\epsilon}
\newcommand{\gL}{\Lambda}
\newcommand{\ti}[1]{\tilde{#1}}
\newcommand{\SO}{\text{SO}}
\newcommand{\cal}{\mathcal}
\newtheorem{thm}{Theorem}[section]
\newtheorem{prop}[thm]{Proposition}
\newtheorem{lem}[thm]{Lemma}
\newtheorem{cor}[thm]{Corollary}
\newtheorem{defn}[thm]{Definition}
\newtheorem{rem}[thm]{Remark}
\newtheorem{clm}[thm]{Claim}
\begin{document}

\author{N. Bergeron and T. Gelander}

\date{\today}

\title{A note on local rigidity}

\maketitle

\begin{abstract}
The aim of this note is to give a geometric proof for classical local 
rigidity of lattices in semisimple Lie groups. We are reproving well known
results in a more geometric (and hopefully clearer) way.
\end{abstract}


\section{Introduction}

Let $G$ be a semisimple Lie group, and let $\gC\leq G$ be an irreducible lattice.
We denote by $\mathcal{R}(\gC ,G)$ the space of deformations of $\gC$ in $G$,
i.e. the space of all homomorphisms $\gC\to G$ with the topology of pointwise
convergence. 
Recall that $\gC$ is called {\it locally rigid} if there is a neighborhood $\gO$
of the inclusion map $\rho_0:\gC\to G$ in 
$\mathcal{R}(\gC ,G)$ such that any $\rho\in\gO$ is conjugate to $\rho_0$.
In other words, this means that if $\gC$ is generated by the finite set $\gS$,
then there is an identity neighborhood $U\subset G$ such that if 
$\rho :\gC\to G$ is a homomorphism such that $\rho (\gc )\in \gc\cdot U$ for
any $\gc\in\gS$, then there is some $g\in G$ for which 
$\rho (\gc )=g\gc g^{-1}$ for any $\gc\in\gC$ (such $\rho$ is called a trivial deformation).

Local rigidity was first proved by Selberg \cite{Selberg} for uniform
lattices in the case $G=SL_n(\BR ),~ n\geq 3$, and by Calabi \cite{Calabi}
for uniform lattices in the case $G=PO(n,1)=\mbox{Isom}(\BH^n)$, $n\geq 3$. 
Then, Weil \cite{Weil}
generalized these results to any uniform irreducible lattice in any $G$, assuming 
that $G$ is not locally isomorphic to $SL_2(\BR)$ (in which case lattices 
have many non-trivial deformations - the Teichmuller spaces). Later, Garland and 
Raghunathan \cite{Gar-Rag} proved local rigidity for non-uniform lattices 
$\gC\leq G$ when $\mbox{rank} (G)=1$ \footnote{Unless otherwise specified $\mbox{rank} (G)$ will 
always mean the real rank of $G$.} under the necessary assumption that $G$ 
is neither locally isomorphic to $SL_2(\BR )$ nor to $SL_2(\BC )$. For non-uniform
irreducible lattices in higher rank semisimple Lie groups, the local rigidity is a 
consequence of the much stronger property: super-rigidity, which was proved by Margulis 
\cite{Margulis}.

Our aim here, is to present a simple proof of the following theorem:

\begin{thm}\label{thm1}
Under the necessary conditions on $G$ and $\gC$, if $\rho$ is a deformation,
close enough to the identity $\rho_0$, then $\rho (\gC)$ is a lattice and 
$\rho$ is an isomorphism.
\end{thm}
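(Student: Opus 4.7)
The plan is to carry everything onto the symmetric space $X=G/K$ and, for a deformation $\rho$ sufficiently close to $\rho_0$, construct a smooth $\rho$-equivariant map $f_\rho : X \to X$ which is $C^1$-close to the identity and proper. Once such a map is in hand the rest of the proof is essentially formal: a proper local diffeomorphism from the simply connected space $X$ to itself is a global diffeomorphism, so $f_\rho$ is a diffeomorphism; the equivariance $f_\rho(\gc\cdot x)=\rho(\gc)\cdot f_\rho(x)$ then forces $\rho$ to be injective (any kernel element would fix every point of $X$) and shows that $\rho(\gC)=f_\rho\,\gC\,f_\rho^{-1}$ acts properly discontinuously on $X$ with quotient of finite volume, hence is a lattice.

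In the cocompact case I would build $f_\rho$ by a standard barycentric recipe. Fix a smooth $\gC$-equivariant partition of unity $\{\psi_\gc\}_{\gc\in\gC}$ on $X$ subordinate to the $\gC$-translates of a relatively compact neighbourhood of a fundamental domain, normalised so that $\psi_{\gc'\gc}(\gc' x)=\psi_\gc(x)$. Since $X$ is CAT(0), any finitely supported probability measure on $X$ has a unique barycentre; set
\[
f_\rho(x) \;=\; \text{bar}\Bigl(\sum_{\gc\in\gC}\psi_\gc(x)\,\delta_{\rho(\gc)\gc^{-1}x}\Bigr).
\]
Equivariance is immediate from the normalisation. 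If $\rho$ agrees with $\rho_0$ to within $U$ on the generating set $\gS$, hence on the finitely many $\gc$ whose support meets a given fundamental domain, then each point $\rho(\gc)\gc^{-1}x$ lies close to $x$, so $f_\rho$ is $C^1$-close to the identity and is therefore a local diffeomorphism; properness is automatic because $\gC$ is cocompact.

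In the non-uniform case the argument on the core of $M=\gC\backslash X$ is the same, but one must control $f_\rho$ out in the cusps. Margulis' thick-thin decomposition expresses each cuspidal end as a quotient of a horoball by a virtually unipotent lattice $\gC_\xi$ in a horospherical subgroup $N_\xi\leq G$. The hypothesis that $G$ is not locally isomorphic to $SL_2(\BR)$ or $SL_2(\BC)$ guarantees that $N_\xi$ has real dimension at least two, and a Garland--Raghunathan type rigidity statement for lattices in such unipotent groups shows that $\rho|_{\gC_\xi}$, after conjugation by an element close to the identity, takes values in a nearby unipotent subgroup stabilising a nearby horoball. One then defines $f_\rho$ on the cusp by an explicit horospherical model and glues it to the core construction along a collar; properness on the cusp is preserved because parabolics remain parabolic.

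The main obstacle is exactly this cusp control. The delicate point is that, a priori, an arbitrary small deformation could turn a parabolic element of $\gC$ into a loxodromic one, destroying the horospherical structure and ruining properness of $f_\rho$ out to infinity; in the excluded cases $SL_2(\BR)$ and $SL_2(\BC)$ this can genuinely occur. The content of the rigidity hypothesis on $N_\xi$ is that this phenomenon is prevented whenever the unipotent radical has dimension at least two, after which the glueing step becomes routine.
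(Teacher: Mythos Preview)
Your barycentric construction for the cocompact part is a perfectly legitimate substitute for the paper's Ehresmann--Thurston machinery; both produce a $\rho$-equivariant map $X\to X$ that is $C^1$-close to the identity on a fundamental domain, and from there the conclusion follows. The paper's route via flat $(G,X)$-connections is more conceptual, yours is more explicit, but on the compact core they amount to the same thing.

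The genuine gap is in the cusps. You have correctly located the crux of the argument---that parabolic elements must remain parabolic under a small deformation---but you have black-boxed it entirely by invoking ``a Garland--Raghunathan type rigidity statement for lattices in such unipotent groups.'' This is not a statement about the unipotent group $N_\xi$ in isolation; the deformation $\rho$ lives in $G$, not in $N_\xi$, and the whole point is that a priori $\rho(\gC_\xi)$ need not land in any unipotent subgroup at all. The actual mechanism (which is the content of Section~3 of the paper) is a dimension count: if some central $z\in\gC_\xi$ were sent to a hyperbolic element, then $\rho(\gC_\xi)$ would preserve its axis and hence embed in $\BR\times\SO_{n-1}(\BR)$; a short argument (Lemma~3.1, Proposition~3.1) then shows that a finite-index subgroup would sit inside a connected abelian group of dimension $\le [\frac{n-1}{2}]+1<n-1$, which is impossible because $\gC_\xi$ is Zariski-dense in the $(n-1)$-dimensional group $N$. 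An analogous argument rules out the elliptic case. None of this is visible in your sketch.

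Relatedly, your numerical threshold is off. You write that the exclusion of $\SL_2(\BR)$ and $\SL_2(\BC)$ ``guarantees that $N_\xi$ has real dimension at least two,'' and later that the parabolic-preservation holds ``whenever the unipotent radical has dimension at least two.'' But in $\PSL_2(\BC)$ the unipotent radical already has dimension~$2$, and there parabolics \emph{can} deform to loxodromics (Thurston's Dehn filling). The inequality $[\frac{n-1}{2}]+1<n-1$ that makes the dimension argument work requires $n\ge 4$, i.e.\ $\dim N_\xi\ge 3$.

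Finally, you do not touch the higher-rank non-uniform case. There the cusp groups are no longer virtually abelian, the end of $M$ is a single complex of horospherical pieces rather than a disjoint union of cusps, and a different mechanism (the $U$-element property of unipotents in higher-rank lattices, due to Lubotzky--Mozes--Raghunathan) is used to force $\rho$ to send unipotents to unipotents. Your horospherical model and glueing sketch would need substantial elaboration to cover this.
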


\begin{rem} 
In the higher rank case, we need the lattice to be arithmetic, which 
is always true by Margulis' theorem. But this is of course cheating. Anyway our 
method provides, for example, a simple geometric proof of the local rigidity of 
$SL_n ({\Bbb Z} )$ in $SL_n ({\Bbb R})$ for $n\geq 3$. Note also that Margulis' arithmeticity 
theorem is easier in the non compact case \cite {M3}
(it does not rely on super-rigidity) which is the only case 
we need here.   
\end{rem}

\begin{rem}
In fact, we prove a ``stronger'' result than the statement of theorem 
\ref{thm1}. Namely, if $X=G/K$ is the associated symmetric space, then 
for a small deformation $\rho$, $X/\rho (\gC )$ is homeomorphic to $X/\gC$ and 
$\rho :\gC\to\rho (\gC )$ is an isomorphism.
\end{rem} 

With Theorem \ref{thm1}, local rigidity follows from Mostow rigidity.
Recall that Mostow's rigidity theorem \cite{Mostow} says that if $G$ is a connected
semisimple Lie group not locally isomorphic to $SL_2(\BR )$, and $\gC_1,\gC_2\leq G$
are isomorphic irreducible lattices in $G$, then they are conjugate in $G$, i.e. 
$\gC_1=g\gC_2g^{-1}$ for some $g\in G$.

\begin{rem}
In Selberg's paper \cite{Selberg} a main difficulty 
was to prove a weaker version of \ref{thm1} in his special case.
Selberg also indicated that his proof (which is quite elementary and very 
elegant) would work for more general higher rank
cases if the general version (\ref{thm1} above) of his lemma 9 could be proved.
This supports the general feeling that it should be possible to give a complete elementary proof 
of local rigidity which does not rely on Mostow's rigidity.
However, the authors decided not to strain too much, in trying to avoid Mostow 
rigidity, since in the rank one case there are by now very elegant and straightforward 
proofs of Mostow rigidity (such as the proof of Gromov and Thurston 
(see \cite{Thurston}) for compact hyperbolic manifolds, and of Besson, Courtois and 
Gallot \cite{BCG} for the general rank one case, note also that Mostow's original proof is much simpler
in the rank one case) while for higher rank spaces
the much stronger Margulis' supper-rigidity holds.
\end{rem}

Our proof of Theorem \ref{thm1} relies on an old principle implicit in Ehresmann's work 
and to our knowledge first made explicit in Thurston's notes. This principle implies 
in particular that any sufficiently small deformation of a cocompact lattice in a 
Lie group stays a lattice. It seems to have remained unknown for many years, as both Selberg 
and Weil spent some effort in proving partial cases of it. It is by now fairly well known, but 
the authors could not find a complete written proof of it in the literature although 
everyone knows that it is implicit in Ehresmann's work. In the second section of this paper, we 
thus decided to write a complete proof, using Ehresmann's beautiful viewpoint, 
of a  slight generalization of this principle to the non-compact case needed for our purpose.  
In the third section we prove Theorem \ref{thm1} in the rank one case.
In the last section we prove Theorem \ref{thm1} for arithmetic lattices of 
higher rank.

Non-uniform lattices in the group $PSL_2(\BC )\cong\mbox{Isom}(\BH^3 )$ are 
not locally rigid, although Mostow rigidity is valid for $PSL_2(\BC )$. This
shows that local rigidity is not a straightforward consequence of Mostow 
rigidity. Moreover, it was shown by Thurston that if $\gC\leq PSL_2(\BC )$ is
a non-uniform lattice then there is an infinite family of small deformations
$\rho_n$ of $\gC$ such that $\rho_n(\gC )$ is again a lattice, but not 
isomorphic to $\gC$, and in fact many of the $\rho_n(\gC )$'s may be constructed to be uniform. 
The proof which we present in this note is in a sense a product of our 
attempt to understand and formulate why similar phenomenon cannot happen in 
higher dimensions.

In fact, we shall show that some weak version of local rigidity is valid also 
for non-uniform lattices in $PSL_2(\BC )$. This would allow us to obtain a direct proof of the fact
that for any (non-uniform) lattice in $G=SL_2(\BC )$ (as well as in any connected $G$ which is defined 
over $\BQ$ as an algebraic group and is locally isomorphic to $SL_2(\BC )$), there is an algebraic 
number field $K$, and an
element $g\in G$, such that $g\gC g^{-1}\leq G(K)$. This fact (which is well known in the general
case for any lattice in any $G$ which is not locally isomorphic to $SL_2(\BR )$) can be proved by 
a few line argument when $\gC$ is locally rigid in $G$. When $G$ is locally isomorphic to 
$SL_2(\BC )$, and $\gC\leq G$ is non-uniform (and hence not locally rigid), this was proved in 
\cite{Gar-Rag}, section 8, by a special and longer argument. Our method allows to unify this 
special case to the general case. 

\medskip

In section \ref{Ehresmann-Thurston} we shall formulate and prove the Ehresmann-Thurston principle and 
use it in order to prove local rigidity for uniform lattices (Weil's theorem). In section \ref{rank-1},
which is in a sense the main contribution of this note, we shall present a complete and elementary 
proof for local rigidity of non-uniform lattices in groups of 
rank one (originally due to Garland and Raghunathan).
Then, in section \ref{higher-rank}, we shall apply our method to higher-rank non-uniform lattices.
In some cases, our proof remains completely elementary. However, for the general case we shall rely on some
deep results.


\section{The Ehresmann-Thurston principle}\label{Ehresmann-Thurston}

In this section $X$ is a model manifold and $G$ a transitive group of real analytic homeomorphisms 
of $X$. Given such data one can construct all possible manifolds by choosing open subsets of $X$ 
and pasting them together using restrictions of homeomorphisms from the group $G$. 
Following Ehresmann and Thurston
we will call such a manifold a $(G,X)$-manifold. More precisely, a {\it $(G,X)$-manifold} is a 
manifold $M$ together with an atlas $\{\kappa_i : U_i \rightarrow X\}$ 
such that the changes of charts are restrictions of elements of $G$. 

Such a structure enables to develop the manifold $M$ along its paths 
(by pasting the open subsets of X) into the model manifold $X$. We can do this in 
particular for closed paths representing generators of the fundamental group $\pi_1(M)$. 
Starting from one of the open sets $U\subset M$ of the given atlas, 
the development produces a covering space $M'$ of $M$, a 
representation $\pi_1(M) \stackrel{\rho}{\rightarrow} G$ called the holonomy and a 
structure preserving immersion, the developing map $M' \stackrel{d}{\rightarrow} X$, which is 
equivariant with respect to the action of $\pi_1(M)$ on $M'$ and of 
$\rho\big(\pi_1(M)\big)$ on $X$. 

There is an ambiguity in the choice of one chart $\kappa : U\rightarrow X$ about a base point.
However, deferent charts around a point are always identified on a common sub-neighborhood modulo
the action of $G$.

Ehresmann's way of doing mathematics was to find neat definitions for his concepts so that 
theorems would then follow easily
from definitions. We shall try to follow his way. Recall first his fiber bundle picture of 
a $(G,X)$-structure and his neat definition of the developing map.

To make it simple, let us describe first the picture for one chart $\kappa : U \rightarrow X$ which 
is a topological embedding. We can associate a trivial fiber 
bundle $E_U = U \times X \rightarrow U$ by assigning to any $m \in U$ the model space $X_m =X$. 
The manifold $U$ is embedded as the
diagonal cross section $s(U) =\{ (m,\kappa (m)) \; : \; m\in U \} \subset U \times X$. Its points 
are the points of tangency of fibers and base manifolds.

For the global picture of a $(G,X)$-structure on a manifold $M$, we will restrict ourselves
to compatible charts that are topological 
embeddings $\kappa : U \rightarrow X$ for open sets $U \subset M$. A point of the fiber bundle 
space $E$ over $M$ is, by definition, a triple 
$$ 
 \{ m, \kappa , x\},
$$
where $m\in U \subset M$, $\kappa : U\rightarrow X$ is a compatible chart and $x \in X$, modulo 
the equivalence relation given by the action of 
$G$, i.e. $\{ m, \kappa , x\} \sim \{m, \kappa ' ,x' \}$ 
iff there is $g\in G$ such that $x'=g\cdot x$ and $\kappa '=g\circ\kappa$ 
on some sub-neighborhood $V\subset U\cap U'$.
In $E$, the manifold $M$ is embedded as the diagonal cross section $s(M)$, whose points are 
represented by triples $\{ m, \kappa , \kappa (m) \}$. The horizontal leaves represented by the 
triples $\{U, \kappa , x \}$ give a foliation 
${\cal F}$ of the total space $E$, which induces an $n$-plane field $\xi$ in $E$ 
transversal to the fibers and transversal to the cross section $s(M)$. 
Given such data, Ehresmann has defined the classical notion of holonomy. The holonomy is 
obtained by lifting a closed curve starting 
and ending at $m_0 \in M$, into all curves tangent to $\xi$, and by this getting an element of
$G$ acting on $X_{m_0}$. 
For contractible closed curves in the base space $M$, the holonomy is of course the identity map 
of the fiber $X_{m_0}$.

This leads us, following Ehresmann, to consider the general notion of {\it flat Cartan connections}. 
In fact $(G,X)$-manifolds are the flat cases of manifolds $M$ with a general $(G,X)$-connection. 
A {\it $(G,X)$-connection} is defined in \cite{Ehresmann2} as follows:
\begin{enumerate}
\item a fiber bundle $X \rightarrow E \rightarrow M$ with fiber $X$ over $M$,
\item a fixed cross section $s(M)$,
\item an $n$-plane field $\xi$ in $E$ transversal to the fibers and transversal to the fixed cross 
section, such that
\item the holonomy of each closed curve starting and ending at $m_0 \in M$ is obtained by lifting 
it to all curves tangent to $\xi$, and is a homeomorphism of $X_{m_0}$ which is induced by an 
element of $G$.
\end{enumerate} 
A $(G,X)$-connection is said to be {\it flat} if contractible closed curves have trivial holonomy.

We have just seen that a $(G,X)$-structure on a manifold $M$ yields a flat $(G,X)$-connection on 
$M$. Let us describe how we can recover the holonomy and the developing map of a given 
$(G,X)$-structure from the induced flat $(G,X)$-connection.
Let $M$ be a manifold with a flat $(G,X)$-connection. For general closed curves, starting and 
ending at $m_0$, the holonomy gives a representation of $\pi_1 (M)$ into the group $G$ acting 
on $X_{m_0}$. When the $(G,X)$-connection is induced by a $(G,X)$-structure on $M$, this map is 
the usual holonomy map of the $(G,X)$-structure. 
$E$ can be described as the quotient
$$
 \pi_1(M) \backslash (M' \times X) 
$$
where the action is the diagonal one, with $\pi_1(M)$ acting on $M'$ by covering 
transformations and on $X$ via the holonomy map through the action of $G$ on $X$.
The development of a curve ending at $m_0\in M$, is obtained by dragging along $\xi$ the 
corresponding points until they arrive in the fiber $X_{m_0}$. 
As soon as the connection $\xi$ is flat, homotopic curves with common initial and end points give 
the same image of the initial point in the end fiber and the development map 
$M' \rightarrow X_{m_0}$ is well defined. We thus conclude that 
any flat $(G,X)$-connection on a manifold $M$ yields a $(G,X)$-structure.

It is then an easy matter to prove the following useful result, which to our knowledge first 
appeared in a slightly different setup in Thurston's notes \cite{Thurston}, but which, as we will see, and   
seems to be a common knowledge, is elementary using Ehresmann's viewpoint of $(G,X)$-structures. 
For a slightly different proof, from which we have borrowed some ideas, see 
\cite{CanaryEpsteinGreen} which is a very nice reference for all basic material on 
$(G,X)$-structures.

\begin{thm}[{\bf The Ehresmann-Thurston Principle}]
\label{prin}
Let $N$ be a smooth compact manifold possibly with boundary.
Let $N_{Th}$ be the reunion of $N$ with a small collar $\partial N  \times [0,1)$ of its boundary. 
Assume $N_{Th}$ is equipped with a $(G,X)$-structure $M$ whose holonomy is 
$\rho_0 : \pi_1(N)\rightarrow G$.
Then, for any sufficiently small deformation $\rho$ of $\rho_0$ in $\mathcal{R} ( \pi_1(N),G)$, there is a  
$(G,X)$-structure on the interior of $N$ whose holonomy is $\rho$. 
\end{thm}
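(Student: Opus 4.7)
The plan is to exploit the Ehresmann correspondence just developed: a $(G,X)$-structure on a manifold $M$ is the same data as an $X$-bundle $E\to M$ carrying a flat connection $\xi$ and a cross-section of $E$ transversal to $\xi$. Write $\pi=\pi_1(N)=\pi_1(N_{Th})$. The given structure on $N_{Th}$ presents itself as the bundle
\[
E_0=\pi\backslash(\widetilde{N_{Th}}\times X)\longrightarrow N_{Th},
\]
where $\pi$ acts diagonally using $\rho_0$ on $X$, equipped with the tautological horizontal flat connection $\xi_0$ and a cross-section $s_0$ transversal to $\xi_0$. For any nearby homomorphism $\rho$, form the analogous bundle $E_\rho$ with $\pi$ acting via $\rho$, together with its own tautological flat connection $\xi_\rho$. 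The objective is to build, for $\rho$ sufficiently close to $\rho_0$, a cross-section $s_\rho$ of $E_\rho$ over $\mathrm{int}(N)$ transversal to $\xi_\rho$; the Ehresmann correspondence will then convert this data into a $(G,X)$-structure on $\mathrm{int}(N)$ whose holonomy is $\rho$.

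I would produce $s_\rho$ by constructing a smooth bundle isomorphism $\Phi:E_0\to E_\rho$ over a compact neighborhood of $\overline N$ in $N_{Th}$, not required to intertwine the flat connections, but which in local trivializations coming from lifts to $\widetilde{N_{Th}}$ is $C^0$-close to the identity; then I set $s_\rho:=\Phi\circ s_0$. Passing to universal covers, both $E_0$ and $E_\rho$ lift to the same product $\widetilde{N_{Th}}\times X$, so specifying $\Phi$ is the same as giving a smooth $F:\widetilde{N_{Th}}\to G$ satisfying the twisted equivariance
\[
F(\gamma\tilde m)=\rho(\gamma)\,F(\tilde m)\,\rho_0(\gamma)^{-1}\qquad(\gamma\in\pi).
\]
Using the collar, fix a compact $K$ with $N\subset\mathrm{int}(K)\subset K\subset N_{Th}$ and a smooth triangulation of $K$ so fine that every simplex lifts to $\widetilde{N_{Th}}$. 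Choose a lift for one representative from each $\pi$-orbit of $0$-simplices meeting $K$ and set $F\equiv e$ at these points; equivariance then prescribes $F$ on the other $0$-simplices of a fundamental domain to take finitely many values of the form $\rho(\gamma)\rho_0(\gamma)^{-1}$, all in an arbitrarily small neighborhood of $e\in G$ once $\rho$ is close enough to $\rho_0$. Extend $F$ inductively over the skeleta of a fundamental domain by smoothly interpolating the already-defined boundary values, keeping $F$ in a prescribed small neighborhood of $e$, and finally propagate by equivariance to all of $\widetilde{N_{Th}}$.

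With $\Phi$ in hand, set $s_\rho=\Phi\circ s_0$. In each local trivialization $s_\rho$ is arbitrarily close to $s_0$ and the pulled-back flat connection $\Phi^{-1}_*\xi_\rho$ is arbitrarily close to $\xi_0$. Transversality of a section to a connection is an open condition and $\overline N$ is compact, so $s_\rho$ stays transversal to $\xi_\rho$ on an open neighborhood of $\overline N$ once the deformation is small enough. Restricting to $\mathrm{int}(N)$ and invoking the Ehresmann correspondence completes the proof.

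The main obstacle is the inductive construction of $F$ along a fundamental domain: one must simultaneously interpolate the finitely many boundary values $\rho(\gamma)\rho_0(\gamma)^{-1}$ while keeping the extension within a fixed small neighborhood of $e\in G$, and verify that the resulting $F$ is well defined and smooth after being propagated by equivariance. The collar $N_{Th}\setminus N$ is indispensable here, as it furnishes the buffer in which the triangulation lives strictly past $\partial N$; this allows transversality to be certified on a compact set strictly containing $\overline N$ and then restricted to $\mathrm{int}(N)$ without any boundary pathology.
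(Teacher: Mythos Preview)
Your approach is sound and follows the same Ehresmann bundle strategy as the paper: build a bundle isomorphism $\Phi:E_{\rho_0}\to E_\rho$ close to the identity and use it to transport the $(G,X)$-data. The implementations differ in two respects. First, the paper constructs $\Phi$ by patching local trivializations over a finite open cover of $N_{Th}$, using isotopies of the transition maps and bump functions, whereas you encode $\Phi$ by a single equivariant map $F:\widetilde{N_{Th}}\to G$ built skeleton-by-skeleton over a triangulation; your formulation is more conceptual and makes the twisted equivariance explicit, while the paper's cover-and-patch method avoids the smoothness-across-faces issue your inductive extension must handle. Second, the paper pushes forward \emph{both} the flat foliation and the diagonal section via $\Phi$, so their mutual transversality is automatic and the remaining check is that the transported plane field stays transversal to the fibers of $E_\rho$; you instead keep the canonical flat connection $\xi_\rho$ on $E_\rho$ and transport only the section, which makes the holonomy manifestly equal to $\rho$ but shifts the burden to verifying transversality of $s_\rho$ to $\xi_\rho$.

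One point to tighten: you assert $\Phi$ is $C^0$-close to the identity, but transversality of a section to an $n$-plane field is a $C^1$-open condition, since it concerns the differential of the section (concretely, you need $\tilde m\mapsto F(\tilde m)\cdot d(\tilde m)$ to remain an immersion). Your skeleton construction does in fact give $C^1$-control---boundary values within $\epsilon$ of $e$ interpolated over simplices of fixed diameter produce derivatives of order $\epsilon$---but you should state and use $C^1$-closeness explicitly, and take some care (e.g.\ smoothing a piecewise-affine $F$ in exponential coordinates) so that the equivariant extension is genuinely smooth with small first derivatives. The paper is explicit on this point, recording that its bundle map depends continuously on $\rho$ in the $C^\infty$ topology.
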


\begin{proof}
Let $\rho \in \mathcal{R} (\pi_1(N) , G)$ be a deformation of $\rho_0$. 
We define the following two fiber bundles over $N_{Th}$ with fibers $X$
$$E_{\rho_0} = \pi_1(N) \backslash (N_{Th}' \times X)$$
and 
$$E_{\rho}  = \pi_1(N) \backslash (N_{Th}' \times X)$$
where the action of $\pi_1(N)$ on $X$ is respectively induced via $\rho_0$ and $\rho$ by the 
natural action of $G$ on $X$.   
We denote by $(1,\rho_0 )$ and $(1, \rho)$ the two diagonal actions of $\pi_1(N)$ on 
$N_{Th} \times X$ considered above.

Note that since $N$ is compact, $\pi_1(N)$ is finitely generated and hence \
$\mathcal{R}\big(\pi_1(N),G\big)$ has the structure of an analytic manifold and, in particular, it
is locally arcwise connected. Theorem \ref{prin} is a consequence of the following easy claim.

\begin{clm}\label{diffeo}
If $\rho$ is a deformation of $\rho_0$ in the same connected component of 
$\mathcal{R}\big(\pi_1(N),G\big)$ then there exists a fiber bundle map 
$F: E_{\rho_0} \rightarrow E_{\rho}$ such that
\begin{enumerate}
\item $F$ restricts to a diffeomorphism $\Phi$ above $N$,
\item as $\rho$ gets closer to $\rho_0$,  the lifted diffeomorphism $\tilde{\Phi} : N ' \times X \rightarrow N' \times X$ 
gets closer to the identity in the compact-open topology.
\end{enumerate}
\end{clm}

We temporarily postpone the proof of the claim and conclude the proof of Theorem \ref{prin}. 

When $\rho$ is in the same connected component as $\rho_0$, we have, by Claim \ref{diffeo}, a 
diffeomorphism $\Phi$ 
between the compact fiber bundles $E_{\rho_0}^c$ and $E_{\rho}^c$ which are defined over $N$.
Thus, if ${\cal F}$ denotes 
the horizontal foliation in $E_{\rho_0}$, the restriction of 
${\cal F}$ to $E_{\rho_0}^c$ induces via $\Phi$ an $n$-plane field $\xi$ on 
$E_{\rho}^c$. When $\rho$ is close enough to $\rho_0$, the lifted diffeomorphism $\tilde{\Phi}$ is 
close to the identity so that the $n$-plane field $\xi$ is transversal to both the fibers and the image by $\Phi$ of 
the diagonal cross section in $E_{\rho_0}^c$. Moreover, the holonomy which is
obtained by lifting a closed 
curve starting and ending at $m_0 \in N$, to all curves tangent to $\xi$, is equal
to the image by $\rho$ in $G$ of the homotopy class of the curve. Thus the restriction of 
the bundle $E_{\rho}^c$ to a bundle over the interior of $N$ yields a $(G,X)$-connection. 
This connection is obviously flat and hence gives a 
$(G,X)$-structure on the interior of $N$ whose holonomy is $\rho$. This concludes the proof of 
Theorem \ref{prin} modulo Claim \ref{diffeo}.

\medskip

Let us now prove Claim \ref{diffeo}.
Let $\{ U_i \}_{0 \leq i \leq k}$ be a finite open covering of $N_{Th}$ such that
\begin{itemize}
\item $U_0 = N_{Th} -N$, 
\item for each $1\leq i \leq k$, $U_i$ is simply connected and 
the fiber bundles $E_{\rho}$ and $E_{\rho_0}$ are both trivial above $U_i$.
\end{itemize}

For each integer $1 \leq i \leq k$, we fix a trivialization $(E_{\rho})_{|U_i} \cong U_i \times X$ (resp. 
$(E_{\rho_0})_{|U_i} \cong U_i \times X$) of $E_{\rho}$ (resp.  
$E_{\rho_0}$) above $U_i$. For each pair of integers $1 \leq i\neq j \leq k$, we then denote by $g_{\rho}^{i,j}: 
(U_i \cap U_j ) \times X \rightarrow (U_i \cap U_j ) \times X$ 
(resp. $g_{\rho_0}^{i,j}:(U_i \cap U_j ) \times X \rightarrow (U_i \cap U_j )\times X $)
the diffeomorphism, which is the product of the identity on the first factor with the corresponding
change of charts on the second factor,
between the trivializations $(E_{\rho})_{|U_i}$ and $(E_{\rho})_{|U_j}$ (resp. 
$(E_{\rho_0})_{|U_i}$ and $(E_{\rho_0})_{|U_j}$).

Let $U_i^1 = U_i$ $(1 \leq i \leq k)$ and for each integer $r>0$, let 
$\{ U_i^{r+1} \}_{1\leq i \leq k}$ be a shrinking of 
$\{ U_i^r \}_{1 \leq i \leq k}$ (i.e. for each integer $i$, $U_i^{r+1}$ is an open set whose 
closure is included in $U_i^r$) 
such that $\{ U_0 \} \cup \{ U_i^{r+1} \}_{1\leq i \leq k}$ is still a covering of $N_{Th}$.

As both $E_{\rho}$ and $E_{\rho_0}$ are trivial above $U_1$, the identity map $U_1 \times X \rightarrow U_1 \times X$
induces a diffeomorphism $F_1:(E_{\rho_0})_{U_1^1}\to (E_{\rho})_{U_1^1}$. 

We will define by induction on $s$, a diffeomorphism $F_s$ between $E_{\rho_0}$ and $E_{\rho}$ 
above $U_1^s \cup \ldots \cup U_s^s$
which is equal to $F_{s-1}$ above $U_1^s \cup \ldots \cup U_{s-1}^s$. But for the sake of clarity 
let us first define the diffeomorphism $F_2$.

Let's first describe the $s=2$ step.   
First note that, as $\rho$ is a deformation of $\rho_0$ in the same connected component, the diffeomorphism 
$g_{\rho}^{1,2} \circ (g_{\rho_0 }^{1,2} )^{-1}$ of $(U_1 \cap U_2 ) \times X$ 
is isotopic to the identity, by an isotopy which preserves each of the fiber $\{ *\} \times X$. We denote by
$(m,x) \mapsto (m, \varphi_t (x))$, $t\in [0,1]$, this isotopy.

We want to define $F_2$ above $U_2^2$, recall it will be equal to $F_1$ above $U_2^2\cap U_1^2$. 
Let $W$ be an open set such that
$$\overline{U_2^2} \subset W \subset \overline{W} \subset U_2^1 .$$
We define $F_2$ above $U_2^2$ as follows:
\begin{itemize}
\item 
Above $U_2^1 - \overline{W} \subset U_2$, both $E_{\rho_0}$ and $E_{\rho}$ are trivial and the identity map 
$(U_2^1 - \overline{W} ) \times X \rightarrow (U_2^1 -\overline{W}) \times X$ induces a diffeomorphism
$f : (E_{\rho_0})_{|(U_2^1 -\overline{W})} \rightarrow (E_{\rho})_{|(U_2^1 -\overline{W})}$. 
\item 
We define $f=F_1$ above $U_1^2 \cap U_2^2$. 
(In the coordinates $(U_1^2 \cap U_2^2 ) \times X$ induced
by the trivializations above $U_2$, the diffeomorphism $f$ is equal to the restriction of 
$g_{\rho}^{1,2} \circ (g_{\rho_0 }^{1,2} )^{-1}$.)
\item 
We extend $f$ above a small collar neighborhood $B \times [0, \varepsilon )$ of the boundary
$B$ of $U_1^2 \cap U_2^2$ in $W-(U_1^2 \cap U_2^2)$ by the map from 
$\big( B \times [0, \varepsilon )\big) \times X$ to itself, given by
$\big( (b,t),x\big)\mapsto\big( (b,t) , \varphi_{g(t)}(x)\big)$
where $g:[0,\gep ]\to [0,1]$ is the classical smooth bump function.
\item 
Finally we can extend $f$ (by the identity) above all $U_2^1$ to get a smooth diffeomorphism.
\item 
We then define $F_2$ above $U_2^2$ to be the restriction of $f$ to that set.
\end{itemize}
If we let $F_2$ to be equal to $F_1$ above $U_1^2$, we get a well defined diffeomorphism $F_2$ from 
$E_{\rho_0}$ to $E_{\rho}$ above $U_1^2 \cup U_2^2$.

To follow the induction we need to define $F_{s+1}$ above $U_{s+1}^{s+1}$. First note that above $U_{s}^{s+1}$, the fiber bundles 
$E_{\rho_0}$ and $E_{\rho}$ are both trivial. 

Let $W$ be an open set such that
$$\overline{U_{s+1}^{s+1}} \subset W \subset \overline{W} \subset U_{s+1}^s .$$
We define $F_{s+1}$ above $U_{s+1}^{s+1}$ as follows:
\begin{itemize}
\item Above $U_{s+1}^s - \overline{W}$, both $E_{\rho_0}$ and $E_{\rho}$ are trivial and we define a function $f$ to be the canonical 
diffeomorphism between their trivializations.
\item Above $U_{s+1}^{s+1} \cap (U_1^{s+1} \cup \ldots \cup U_s^{s+1} )$ we define $f$ to be equal to $F_s$.
\item We extend $f$ as in the first inductive step, by using an isotopy between changes of charts 
composed with a smooth bump function, 
to get a smooth diffeomorphism between $E_{\rho_0}$ and $E_{\rho}$ above all $U_{s+1}^s$.
\item We define $F_{s+1}$ above $U_{s+1}^{s+1}$ to be equal to the restriction of $f$ above $U_{s+1}^{s+1}$.
\end{itemize}
If we let $F_{s+1}$ to be equal to $F_s$ above $U_1^{s+1} \cup \ldots \cup U_s^{s+1}$,
we get a well defined diffeomorphism $F_{s+1}$ from $E_{\rho_0}$ to $E_{\rho}$ above $U_1^{s+1} \cup \ldots \cup U_{s+1}^{s+1}$.

The local trivialization of the fiber bundle $E_{\rho}$ depends continuously (in the $C^{\infty}$-topology) on $\rho$, and hence
$F_{s+1}$ depends continuously on $F_s$ and $\rho$. 

We continue the induction until $s=k$ and get a diffeomorphism between $E_{\rho_0}$ and $E_{\rho}$ 
above $U_1^{k} \cup \ldots \cup U_k^k$ which 
restricts to a fiber bundle diffeomorphism between the corresponding bundles above $N$. Moreover we have seen that the 
restriction of $F$ above $N$ depends continuously (in the $C^{\infty}$ topology) on $\rho$. 
This concludes the proof of the claim. 
\end{proof}

\bigskip

Recall now  Ehresmann's definition \cite{Ehresmann2} of a complete $(G,X)$-structure on a manifold $M$. Let $M$ be a $(G,X)$-manifold.
Any curve $c$ starting from a point $m_0$ in $M$ can be developed to a curve $\tilde{c}$ in $X$. 
The $(G,X)$-manifold is said to be {\it complete} if, conversely, any curve $\tilde{C}$ extending 
$\tilde{c}$ in $X$ is a development of a curve in $M$ extending $c$.
When $X$ is a Riemannian homogeneous $G$-space, for a Lie group $G$ of isometries of $X$, 
it is a classical theorem of Hopf and Rinow that this notion of completeness coincides with the 
usual notion of metric completeness.
  
The Ehresmann-Thurston principle above is especially useful when combined with the following.

\begin{thm}[Ehresmann \cite{Ehresmann1}] \label{complet}
If $M$ is a complete $(G,X)$-manifold, then the developing map induces an isomorphism between its 
universal cover and the universal cover of $X$.
\end{thm}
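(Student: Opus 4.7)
The plan is to prove that the developing map $d: \widetilde{M} \to X$ is a covering map; the theorem then follows immediately, since $\widetilde{M}$ is simply connected, and a simply connected covering space of $X$ must coincide with the universal cover $\widetilde{X}$ up to a canonical homeomorphism (choosing basepoints). Note that $d$ is automatically a local diffeomorphism: each compatible chart $\kappa : U \to X$ in the $(G,X)$-atlas lifts to a local model for $d$ on the preimage of $U$ in $\widetilde{M}$. The content of the theorem is therefore to upgrade this purely local information to a global covering property, which is precisely where the completeness hypothesis enters.

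The heart of the argument is a path-lifting property, which is essentially a reformulation of Ehresmann's definition of completeness. Given $\tilde{m}_0 \in \widetilde{M}$ and a curve $c: [0,1] \to X$ with $c(0) = d(\tilde{m}_0)$, completeness produces a curve $\tilde{c}$ in $\widetilde{M}$ starting at $\tilde{m}_0$ whose development is $c$, i.e.\ $d\circ \tilde{c} = c$; uniqueness of $\tilde{c}$ follows from $d$ being a local diffeomorphism together with a standard connectedness argument on the set $\{t : \tilde{c}_1(t)=\tilde{c}_2(t)\}$. To promote path-lifting to the evenly-covered property, I would fix $x_0\in X$ and a connected, simply connected open ball $B\ni x_0$. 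For each $\tilde{m}\in d^{-1}(x_0)$, define a slice $S_{\tilde{m}}\subset d^{-1}(B)$ as the set of endpoints of path-lifts (starting at $\tilde{m}$) of curves in $B$ emanating from $x_0$. The simple connectedness of $B$, combined with the homotopy invariance of path-lifts under a local diffeomorphism, guarantees that $S_{\tilde{m}}$ is well defined and that $d|_{S_{\tilde{m}}} : S_{\tilde{m}} \to B$ is a bijection, hence a diffeomorphism. Uniqueness of lifts makes distinct slices disjoint, and surjectivity of path-lifting ensures they exhaust $d^{-1}(B)$, so $B$ is evenly covered.

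The main obstacle I expect is the verification that the slice construction behaves as advertised, specifically the independence of the lift endpoint on the particular path chosen within $B$ — the monodromy step. This relies crucially on the simple connectedness of $B$ and on the fact that a homotopy of paths in $X$ lifts, through the local diffeomorphism $d$, to a homotopy of lifts with matching endpoints; this is a standard covering-space argument, but it is the one place where non-trivial topology genuinely intervenes. Once $d: \widetilde{M}\to X$ is known to be a covering map, the theorem's conclusion follows from the uniqueness of the universal cover: $\widetilde{M}$ is a simply connected cover of $X$ and therefore gets identified with $\widetilde{X}$ by a unique basepoint-preserving homeomorphism compatible with the two covering projections.
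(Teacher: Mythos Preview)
The paper does not supply its own proof of this theorem: it is stated with attribution to Ehresmann and immediately used to derive Corollary~\ref{compact rigidity}. There is therefore nothing in the paper to compare your argument against.

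That said, your proof is the standard one and is correct. One small point worth making explicit: the paper's definition of completeness is phrased in terms of \emph{extending} the development of a curve already drawn in $M$, not in terms of lifting an arbitrary curve in $X$ through $d$. To get the path-lifting you invoke, take the constant curve at the image of $\tilde m_0$ in $M$ as the initial curve; its development is the constant curve at $d(\tilde m_0)$, and any $c:[0,1]\to X$ with $c(0)=d(\tilde m_0)$ is then an extension in the sense of the definition. With that bridge in place, the rest of your argument (local diffeomorphism, unique path-lifting, slice construction over a simply connected ball, monodromy via homotopy lifting, hence covering map, hence universal cover) goes through without difficulty.
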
 

As a corollary of Theorem \ref{prin} and Theorem \ref{complet} we get the following classical result of Weil 
\cite{Weil0} (see also \cite{Weil}).

\begin{cor} \label{compact rigidity}
Let $\Gamma$ be a cocompact lattice in a connected center free semi-simple Lie group without
compact factors $G$. 
If $\rho$ is a deformation of $\Gamma$ in $G$, close enough to the identity, then $\rho ( \Gamma )$ is still a cocompact lattice 
and is isomorphic to $\Gamma$.
\end{cor}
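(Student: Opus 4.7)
The plan is to combine the Ehresmann-Thurston principle (Theorem \ref{prin}) with Ehresmann's completeness theorem (Theorem \ref{complet}) applied to the natural $(G,X)$-structure on the quotient $\Gamma\backslash X$, where $X=G/K$ is the symmetric space. Since $G$ is connected, center free, semisimple and without compact factors, $X$ is simply connected and carries a $G$-invariant Riemannian metric. The transitive action of $G$ by isometries turns $X$ into a $(G,X)$-manifold, and $M=\Gamma\backslash X$, being cocompact, is a compact $(G,X)$-manifold whose holonomy (identified via the covering $X\to M$) is exactly the inclusion $\rho_0\colon\Gamma\hookrightarrow G$.

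First I would apply Theorem \ref{prin} to $N=M$ (which has empty boundary, so $N_{Th}=N$): for every deformation $\rho$ of $\rho_0$ sufficiently close to $\rho_0$ in $\mathcal{R}(\Gamma,G)$, there is a $(G,X)$-structure on $M$ whose holonomy is $\rho$. Denote this new $(G,X)$-manifold by $M_\rho$. Next I would verify that $M_\rho$ is complete in the sense of Ehresmann. Because $G$ acts on $X$ by isometries of a Riemannian metric, any $(G,X)$-structure on $M$ induces a bona fide Riemannian metric on $M$; since $M$ is compact this metric is automatically complete, and by Hopf-Rinow the induced $(G,X)$-structure on $M_\rho$ is complete in Ehresmann's sense.

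Theorem \ref{complet} then provides a diffeomorphism between the universal covers: the developing map $d\colon \widetilde{M_\rho}\to X$ is a $\rho$-equivariant diffeomorphism (we use here that $X$ is already simply connected, so its universal cover is itself). Pushing the deck action of $\Gamma$ on $\widetilde{M_\rho}$ through $d$, we conclude that $\rho(\Gamma)$ acts properly discontinuously and freely on $X$ with compact quotient homeomorphic to $M$, so $\rho(\Gamma)$ is a torsion free cocompact lattice of $G$. Equivariance of $d$ also forces $\rho$ to be injective: if $\rho(\gamma)=1$ for some $\gamma\neq 1$, then the nontrivial deck transformation by $\gamma$ on $\widetilde{M_\rho}$ would correspond under $d$ to the identity on $X$, contradicting that $d$ is a diffeomorphism. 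Hence $\rho\colon\Gamma\to\rho(\Gamma)$ is an isomorphism, finishing the proof.

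The bulk of the work has already been absorbed in Theorem \ref{prin}; the remaining conceptual step, and the one I regard as the heart of the matter, is the verification of completeness of $M_\rho$. Fortunately in the cocompact case this reduces to a one line appeal to Hopf-Rinow, whereas in the non-uniform setting of Sections \ref{rank-1} and \ref{higher-rank} the same step becomes substantially harder and is precisely where genuine new arguments have to be inserted.
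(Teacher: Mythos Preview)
Your argument is essentially identical to the paper's: apply Theorem~\ref{prin} to the compact $(G,X)$-manifold $M=\Gamma\backslash X$, observe that compactness plus the Riemannian nature of $X$ forces the deformed structure to be complete, and invoke Theorem~\ref{complet} to identify the universal cover with $X$ and conclude.

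There is one small omission. You implicitly assume that $M=\Gamma\backslash X$ is a manifold and that $X\to M$ is a genuine covering, i.e.\ that $\Gamma$ acts freely on $X$; this requires $\Gamma$ to be torsion free, which the statement does not assume. The paper handles this by first invoking Selberg's lemma to pass to a torsion-free subgroup of finite index, together with the easy observation that the corollary for a finite index subgroup implies it for $\Gamma$ itself. Once you insert that one-line reduction, your proof coincides with the paper's.
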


\begin{proof}[Proof of Corollary \ref{compact rigidity}.] 
The group $\gC$ is finitely generated and by Selberg's lemma it has a torsion-free finite index subgroup. 
It is easy to see that if \ref{compact rigidity} holds for a finite index subgroup of $\gC$ then it
also holds for $\gC$. Hence, we shall assume that $\gC$ itself is torsion-free. 
Let $X$ be the symmetric space 
associated to $G$. It is simply connected. Let $M$ be the $(G,X)$-manifold $\Gamma \backslash X$. 
It is compact and its fundamental group is $\Gamma$. According to Theorem \ref{prin}, if 
$\rho$ is a sufficiently small deformation of the inclusion $\Gamma \subset G$, then there exists a new 
$(G,X)$-structure $M'$ on the manifold $M$, whose holonomy is $\rho : \Gamma \rightarrow G$. 
But as $M$ is compact and $X$ is Riemannian, this new $(G,X)$-structure is complete and, by Theorem
\ref{complet}, the developing map is a $\Gamma$-equivariant isomorphism 
between its universal cover and $X$. This implies that $\rho (\Gamma )$ acts properly 
discontinuously on $X$ and that 
$M'= \rho (\Gamma ) \backslash X$. As $M'$ is homeomorphic to $M$ this concludes the proof.  
\end{proof}

We shall conclude this section by stating the following natural generalization of Corollary 
\ref{compact rigidity} which is also due to Weil \cite{Weil0}.

\begin{prop}\label{GWT}
Let $G$ be a connected Lie group and $\gC\leq G$ a uniform lattice. Then for any sufficiently
small deformation $\rho$ of $\gC$ in $G$, $\rho (\gC )$ is again a uniform lattice and 
$\rho :\gC\to\rho (\gC )$ is an isomorphism.
\end{prop}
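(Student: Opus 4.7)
The plan is to adapt the proof of Corollary \ref{compact rigidity}, now taking $X = G$ itself with $G$ acting on $X$ by left translation. Since any connected Lie group carries a left-invariant Riemannian metric, $X$ is a Riemannian $G$-space, and $N := \gC \backslash G$ is a compact $(G,G)$-manifold. The only essential complication compared to the semisimple center-free situation is that $X = G$ need not be simply connected: if $\pi : \tilde G \to G$ denotes the universal cover with (discrete, central) kernel $Z = \pi_1(G)$, then $\pi_1(N) = \tilde\gC := \pi^{-1}(\gC)$, a cocompact lattice in $\tilde G$ that is an extension of $\gC$ by $Z$, and the holonomy of the natural $(G,G)$-structure on $N$ is $\rho_0 = \pi|_{\tilde\gC}$.

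A deformation $\rho : \gC \to G$ of the inclusion gives a deformation $\tilde\rho := \rho \circ \pi|_{\tilde\gC}$ of $\rho_0$. Choosing a finite generating set of $\tilde\gC$ consisting of generators of the finitely generated abelian group $Z$ together with lifts of generators of $\gC$, one checks that $\tilde\rho$ is close to $\rho_0$ whenever $\rho$ is close to the inclusion. Theorem \ref{prin} then produces, for such $\rho$, a new $(G,G)$-structure on $N$ with holonomy $\tilde\rho$. Compactness of $N$ together with the left-invariant metric on $G$ makes this structure complete (Hopf--Rinow), so by Theorem \ref{complet} the developing map lifts to a diffeomorphism $\tilde N \stackrel{\sim}{\to} \tilde G$. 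Transferring the deck action of $\tilde\gC$ on $\tilde N$ through this diffeomorphism realizes it as left translation on $\tilde G$ via an injective homomorphism $\hat\rho : \tilde\gC \to \tilde G$ whose image is a uniform lattice in $\tilde G$ and which satisfies $\pi \circ \hat\rho = \tilde\rho$.

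The remaining step---descending back to $G$---is the main subtlety. Since $\pi \circ \hat\rho|_Z = \tilde\rho|_Z$ is trivial, $\hat\rho$ restricts to a homomorphism $Z \to Z$; on any fixed finite generating set of $Z$ it must lie close to the identity (because $\hat\rho$ is close to the inclusion $\tilde\gC \hookrightarrow \tilde G$, via the continuous dependence in Claim \ref{diffeo}), and discreteness of $Z$ forces $\hat\rho|_Z = \mathrm{id}_Z$. Hence $Z \subset \hat\rho(\tilde\gC)$, so $\rho(\gC) = \pi(\hat\rho(\tilde\gC))$ is a uniform lattice in $G$ with $\rho(\gC) \backslash G \cong \hat\rho(\tilde\gC) \backslash \tilde G \cong N$. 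Injectivity of $\rho$ then follows: if $\rho(\gc) = e$ and $\tilde\gc \in \pi^{-1}(\gc)$, then $\hat\rho(\tilde\gc) \in Z = \hat\rho(Z)$, so injectivity of $\hat\rho$ yields $\tilde\gc \in Z$, i.e.\ $\gc = e$.
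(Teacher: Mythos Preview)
Your proof is correct and uses the same underlying machinery (the Ehresmann--Thurston principle together with Ehresmann's completeness theorem), but it organizes the reduction to the simply connected case differently from the paper. The paper first treats simply connected $G$ directly, and then, for general $G$, lifts a small deformation $\rho:\gC\to G$ to a small deformation $\tilde\rho:\tilde\gC\to\tilde G$ by path-lifting a curve $\rho_t$ in $\mathcal{R}(\gC,G)$ (using local arcwise connectedness), thereby reducing to the simply connected case already established. You instead apply Theorem~\ref{prin} directly to the $(G,G)$-manifold $N=\gC\backslash G$ with the model space $G$ itself not assumed simply connected; the lift to $\tilde G$ then appears only at the level of the developing map, producing your $\hat\rho:\tilde\gC\to\tilde G$, and the descent back to $G$ is handled by the discreteness argument $\hat\rho|_Z=\mathrm{id}_Z$. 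In effect you have traded the paper's path-lifting step for this discreteness/descent step. Both are short; the paper's version keeps the Ehresmann--Thurston application entirely within the clean simply connected setting, while yours avoids invoking arcwise connectedness of $\mathcal{R}(\gC,G)$ and makes the role of $\pi_1(G)$ more explicit. One small point worth stating in your write-up is why $Z=\pi_1(G)$ is finitely generated (e.g.\ because $G$ deformation retracts onto a maximal compact subgroup), since you use a finite generating set of $Z$.
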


\begin{proof}
Assume first that $G$ is simply connected. Then we can chose a left invariant Riemannian metric
$m$ on $G$, and argue verbatim as in the proof Corollary \ref{compact rigidity} letting $(G,m)$
to stand for $X$, and $G$ to act isometrically by left multiplication. 

Now consider the general case. 
Let $\ti G$ be the universal covering of $G$. Let $\gC\leq G$ be a uniform lattice and let $\ti\gC$
be its pre-image in $\ti G$. Then $\ti\gC$ is a uniform lattice in $\ti G$, and $G/\gC$ is 
homeomorphic to $\ti G/\ti\gC$. Since $\gC$ is finitely generated, the deformation space 
$R(\gC ,G)$ has the structure of a manifold and in particular it is locally arcwise connected. 
Therefore if $\rho$ is sufficiently small deformation of $\gC$ in $G$ then there is a curve of deformations 
$\rho_t\in R(\gC ,G)$ with $\rho_0=$ the identity, and $\rho_1=\rho$. Now for any 
$\gc\in\gC$ and any $\ti\gc\in\ti\gC$ above $\gc$, the curve $\rho_t(\gc )$ lifts uniquely to a 
curve $\ti\rho_t(\ti\gc )$. The uniqueness guaranties that $\ti\rho_t$ is again a homomorphism for any $t$.
Hence $\rho_t$ lifts to a curve 
$\ti\rho_t\in R(\ti\gC ,\ti G)$.
In particular $\rho=\rho_1$ lifts to a deformation $\ti\rho=\ti\rho_1$ of $\tilde{\gC}$ in $\ti G$.
It is also easy to see that the lifting $\rho\mapsto\ti\rho$ on a neighborhood of the inclusion $\rho_0$
is continuous as a map from an open set in $R(\gC ,G)$ to $R(\ti\gC,\ti G)$.
Thus, the general case follows from the simply connected one.
\end{proof}

\bigskip

The proof of Theorem \ref{thm1} will follow the same lines, we just need to understand the 
non-compact parts, namely the ends.


\section{The rank one case}\label{rank-1}

Let $G$ be a connected center free simple Lie group of rank one, with associated 
symmetric space $X=G/K$. Let $\gC$ be a non-uniform lattice in $G$. We are 
going to investigate the possible small deformations of $\gC$, and to prove
local rigidity when $n= \dim (X)\geq 4$, and some weak version of it for dimensions $2$ and $3$.

\medskip

By Selberg's lemma $\gC$ is almost torsion free.        
Note that a small deformation which stabilizes a finite index subgroup must 
be trivial. In the sequel we shall assume that $\gC$ is torsion free.

We keep the notation $\rho_0:\gC\to G$ for the inclusion, and $\rho$ for a 
small deformation of $\rho_0$. We denote by $M=\gC\backslash X$ the corresponding locally
symmetric manifold. We shall show that under some conditions on $\rho$,
$\rho (\gC)$ is again a lattice in $G$ and $M'=\rho (\gC)\backslash X$ is homeomorphic
to $M$, and that these conditions are fulfilled when $\dim X\geq 4$.

Our strategy is to use Ehresmann-Thurston's principle. As $M$ is not compact, we shall
decompose it to a compact part $M_0$ which ``exhausts most of $M$'' and to
cusps. More precisely, let $M_0\subset M$ be a fixed compact
submanifold with boundary, which is obtained by cutting all the cusps of $M$ 
along horospheres. Each such cusp is contained in a connected component of the thin part of
the thick-thin decomposition of $M$ corresponding to the constant $\gep_n$ of
the Margulis lemma (see \cite{ThBook} or \cite{BGS}), and is homeomorphic to $T\times\BR^{\geq 0}$ for
some $(n-1)$-dimensional compact manifold $T$. 
We shall call such cusps canonical. More precisely, a {\it canonical cusp}
is a quotient of a horoball by a discrete group of parabolic isometries which
preserves the horoball and acts freely and cocompactly on its boundary 
horosphere. 

Notice that a canonical cusp has finite volume.
To see this, one can look at the Iwasawa decomposition $G=KAN$ which corresponds to a lifting of
the given cusp, express the Haar measure of $G$ in terms of the Haar measures of $K,A$ and $N$,
and estimate the volume of the cusp in the same way as one shows that a Siegel set has a finite
volume. 

In the argument below we shall assume, for the sake of clearness, 
that $M$ has only one cusp, instead of finitely many. 
It is easy to see that the proof works equally well in the general case.
We choose another horosphere which is contained in $M_0$ and parallel to the 
boundary horosphere, and denote the collar between them by $T\times [0,1]$ 
(see figure 1).  

\begin{figure}
\begin{center}
\input{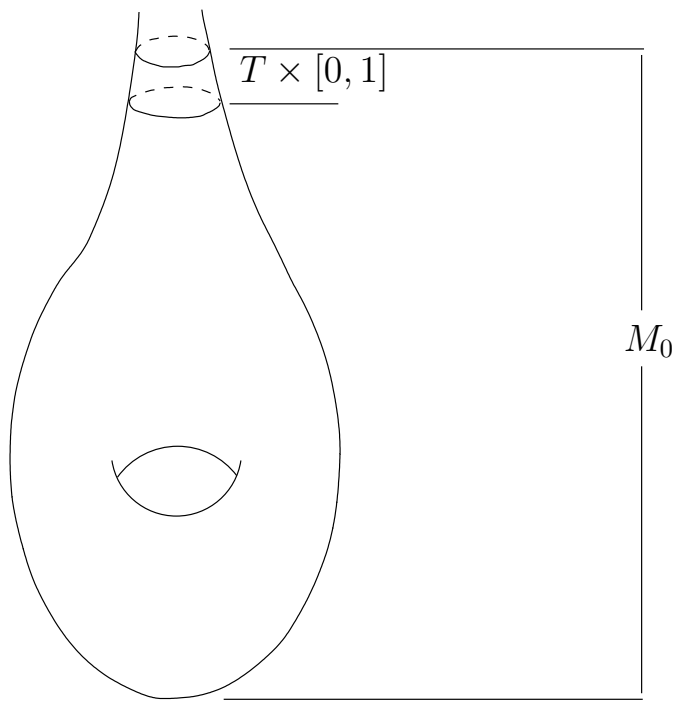}
\caption{The manifold $M_0$}
\end{center}
\end{figure}

By the Ehresmann-Thurston principle, there is a $(G,X)$-structure $M_0'$ on $M_0$ whose 
holonomy is $\rho$.

We will show that under some conditions, the $(G,X)$-structure on $T\times [0,1]$ which is induced 
from $M_0'$ coincides with a $(G,X)$-structure which is induced from some 
canonical cusp $C$ (whose holonomy coincides with $\rho\big(\pi_1(T)\big)$) to
a subset of it, which is homeomorphic to $T\times [0,1]$.
In such case we can glue $M_0$ with $C$ along $T\times [0,1]$ and obtain
a complete $(G,X)$-manifold $M'$ homeomorphic to $M$ whose fundamental group is 
$\rho (\gC )$. Additionally, $M'$ has a finite volume, being a union of the 
compact set $M_0'$ with $C$ which is a canonical cusp and hence has a finite 
volume. 

\medskip

Recall the following basic facts about isometries of rank one symmetric spaces:
\begin{itemize}
\item
Maximal unipotent groups correspond to points in the ideal boundary 
$X(\infty )$. A maximal unipotent group acts simply transitively on each 
horosphere around its fixed point at $\infty$, its dimension is $\dim (X)-1$ and it 
is the unipotent radical of the
parabolic group which is the stabilizer of this point. The maximal
unipotent groups are metabelian.
\item
There are only 3 types of isometries: elliptic, hyperbolic and parabolic.
An elliptic isometry fixes a point in $X$ and is contained in some compact 
subgroup of $G$, it is semisimple and all its eigenvalues have absolute value $1$.

A hyperbolic isometry $\gc$ has exactly two fixed points at the ideal boundary $X(\infty )$ and it
acts by translations on the geodesic connecting them, called the axis of $\gc$, which is also characterized as the
set of points where the displacement function $d(x,\gc\cdot x)$ attains its
minimum. A hyperbolic element is semisimple and has an eigenvalue of absolute value $>1$.

A parabolic element is one which has no fixed points in $X$ and has a unique fixed point in the
ideal boundary $X(\infty )$. If $\gamma$ is parabolic then
$\inf\{d(x,\gc\cdot x):x\in X\}=0$ and the infimum is not attained. An element is parabolic iff it is not semisimple.
In particular, unipotents are paraboloics. In general,
all the eigenvalues of a parabolic element have absolute value $1$.

\item
The quotient map $G\to G/P=X(\infty )$ is open, and the action map
$G\times X(\infty )\to X(\infty )$ is continuous.
\end{itemize}

Let $\gC_T=\pi_1\big (T\times [0,\infty )\big)\leq\gC$ be a subgroup of
$\gC$ which corresponds to the fundamental group of the cusp of $M$. Then
$\gC_T$ is group of parabolic elements, and by \cite{Raghunathan} Corollary 8.25, it contains a subgroup $\gC_T^0$ of finite index
which is unipotent, i.e. a lattice in the unipotent radical of the corresponding parabolic group.
We want to show that $\rho (\gC_T)$ is the
fundamental group of some canonical cusp. So, in particular, we need to
show that $\rho (\gC_T)$ is again a group of parabolic elements. This fact is true only when
$n = \dim X \geq 4$ (but the rest of the argument applies also for dimensions $2$ and $3$).
We shall prove it by
showing that otherwise, $\rho (\gC_T)$ is contained in the stabilizer group of
some geodesic or some point, and then derive a contradiction since
$\rho (\gC_T)$ is ``too large'' and there is ``no room'' for it in such
a subgroup. We shall divide this argument into few claims.

\begin{lem}\label{index}
There exists an integer $i=i(n)$ such that any nilpotent subgroup $A$ of
$\BR\times SO_{n-1}(\BR )$
contains a subgroup $B\leq A$ such that
\begin{itemize}
\item
$|A/B|\leq i$,
\item
$B$ is contained in a connected abelian group.
\end{itemize}
\end{lem}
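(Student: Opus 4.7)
The plan is to reduce the problem to understanding closed nilpotent subgroups of a compact Lie group, and then to produce a bounded-index subgroup that fits inside a maximal connected abelian subgroup. First I would pass to the closure: let $\bar A$ be the closure of $A$ in $\BR\times \SO_{n-1}(\BR)$. Since nilpotency of class $\leq c$ is a closed condition (iterated commutators are continuous), $\bar A$ is still nilpotent; moreover, if $B'\leq\bar A$ satisfies $[\bar A:B']\leq i$ and $B'\subseteq S$ for a connected abelian subgroup $S$, then $B:=B'\cap A$ satisfies $[A:B]\leq i$ and $B\subseteq S$. So I may assume $A$ is closed.

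Next I would analyze the identity component $A^0$. A compact connected nilpotent Lie group is a torus (its Lie algebra carries an $\mathrm{ad}$-invariant inner product, so $\mathrm{ad}_X$ is both skew-symmetric and nilpotent, hence $0$). Thus the projection of $A^0$ to $\SO_{n-1}(\BR)$ is a subtorus $T_0$; combined with the abelian $\BR$-factor, $A^0=\BR^\epsilon\times T_0$ is abelian for some $\epsilon\in\{0,1\}$. Choose a maximal torus $T\subseteq \SO_{n-1}(\BR)$ containing $T_0$ and set $S:=\BR\times T$, a connected abelian Lie subgroup of $\BR\times \SO_{n-1}(\BR)$ containing $A^0$. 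The natural candidate is $B:=A\cap S$, which lies in the connected abelian $S$ by construction; it remains to bound $[A:B]$ by some constant $i(n)$.

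The index bound is the main obstacle, and I would handle it in two stages. Since $A^0$ is abelian, the conjugation action of $A/A^0$ on $A^0$ gives a homomorphism into $\mathrm{Aut}(A^0)$ whose image sits in the discrete group $\GL_k(\BZ)$ with $k=\dim A^0\leq n$; Minkowski's classical bound on finite subgroups of $\GL_k(\BZ)$ then gives an absolute bound $M(n)$ on the order of this image. Passing to its kernel yields a subgroup $A_1\leq A$ of index $\leq M(n)$ centralizing $A^0$, hence lying inside the compact connected subgroup $L:=Z_{\BR\times\SO_{n-1}}(A^0)$ in which $A^0$ is central and $S$ is a maximal connected abelian subgroup. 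Then I would apply Jordan's theorem on finite linear groups to the finite nilpotent $A_1/A^0\hookrightarrow L/A^0\hookrightarrow U(N)$ with $N=N(n)$, extracting a subgroup whose image is abelian of bounded index; finally, decomposing the standard $\BR^{n-1}$-representation of this abelian subgroup into $\SO_{n-1}$-irreducibles of real dimension $\leq 2$ lets me descend into the maximal torus $T$ at a cost of at most $2^{n-1}$. The main difficulty is assembling these reductions cleanly: because a central extension of an abelian group by a torus need not itself be abelian, one must really use the specific embedding in $\SO_{n-1}$, via the real representation theory of its abelian subgroup, to pass the last step into a torus without losing control of the index.
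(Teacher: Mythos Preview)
Your approach via closures, identity components, Minkowski, and Jordan is workable in spirit but has a genuine gap at the Jordan step. You assert that $A_1/A^0$ is finite and that $L = Z_{\BR\times\SO_{n-1}}(A^0)$ is a compact connected group, so that Jordan's theorem applies to $A_1/A^0 \hookrightarrow L/A^0 \hookrightarrow U(N)$. Neither claim holds in general: since the $\BR$-factor is central in the ambient group, $L$ always contains $\BR\times\{1\}$ and is never compact; and for $A=\BZ\times\{1\}$ one has $A^0=\{0\}$, $A_1=A$, and $A_1/A^0\cong\BZ$. (A smaller imprecision: $A^0$ need not equal $\BR^\epsilon\times T_0$ with $T_0\subset\SO_{n-1}$ as subgroups --- a one-parameter subgroup can sit diagonally --- though this is harmless once you replace $T_0$ by the closure of the projection.) The fix is easy and is in fact the paper's first move: since $\BR$ is central, connected, and abelian, it suffices to prove the lemma for the image of $A$ in $\SO_{n-1}(\BR)$; once inside a compact group, closures are compact, component groups of closed subgroups are finite, and your Jordan-theorem route goes through.

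That said, the paper's own argument is considerably shorter and sidesteps Jordan, Minkowski, and the structure of $A^0$ entirely. After reducing to $\SO_{n-1}$, it fixes a small symmetric identity neighborhood $\Omega$ with the Zassenhaus-type property that any subset of $\Omega^2$ generating a nilpotent group has logarithms generating a nilpotent Lie subalgebra; then $B:=\langle A\cap\Omega^2\rangle$ lies in the connected nilpotent (hence abelian, its closure being compact connected nilpotent) subgroup attached to that subalgebra, and $[A:B]\le \mu\big(\SO_{n-1}\big)/\mu(\Omega)$ by a disjoint-translates pigeonhole argument on coset representatives. Your route, once repaired, produces an explicit bound through Jordan's constant and an extra $2^{n-1}$ factor; the paper's route gives a one-paragraph proof with $i(n)$ simply a volume ratio.
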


\begin{proof}
Dividing $\BR\times SO_{n-1}(\BR )$ by the factor $\BR$, we can prove the
analogous statement for $SO_{n-1}(\BR )$.
Let $\gO\subset SO_{n-1}(\BR )$ be a small symmetric identity neighborhood
with the following property: a subset $\gS\subset\gO^2$ generates a nilpotent
group if and only if
$\log\gS\subset \mathfrak{so}_{n-1}(\BR )=\mbox{Lie}\big( SO_{n-1}(\BR )\big)$ generates a
nilpotent Lie sub-algebra (c.f. \cite{Raghunathan} lemma 8.20).

Let $\mu$ be a Haar measure on $SO_{n-1}(\BR )$, and take
$$
 i=\frac{\mu\big( SO_{n-1}(\BR )\big)}{\mu ( \gO )}.
$$

Let now $A$ be a nilpotent subgroup of $SO_{n-1}(\BR )$.
Take $B=\langle A\cap\gO^2\rangle$. Then $B$ is contained in the connected
nilpotent group which corresponds to the nilpotent Lie subalgebra generated by
$\log A\cap\gO^2$. The closure of this connected nilpotent group is compact,
connected and nilpotent, hence it is abelian, and in particular so is $B$.

Now let $F=\{ a_j\}$ be a set of coset representatives for $A/B$. The sets
$a_j\cdot\gO$ must be disjoint, for otherwise we will have
$a_k^{-1}a_j\in\gO^2\cap A\subset B$ for some $i\neq j$.
Therefore
$$
 |F|\leq \frac{\mu\big( SO_{n-1}(\BR )\big)}{\mu ( \gO )}=i.
$$
\end{proof}

\begin{rem} \label{rmk}
The proof of Lemma \ref{index} shows that the analogous statement holds for the group $SO_{n}(\BR )$
as well. Below, we shall assume that the index $i$ is good for both groups
$\BR\times SO_{n-1}(\BR )$ and $SO_{n}(\BR )$.
\end{rem}

The maximal dimension of a connected abelian subgroup of $SO_{n-1}(\BR )$
is $[\frac{n-1}{2}]$ - the absolute rank of ${\Bbb S}{\Bbb O}_{n-1}$,
and hence the maximal dimension of a connected abelian subgroup of
$\BR\times SO_{n-1}(\BR )$ is $[\frac{n-1}{2}]+1$.
Moreover, the stabilizer group of a geodesic $c\subset X$ embeds naturally in
$\BR\times SO_{n-1}(\BR )$. This implies:

\begin{prop}\label{smalldim}
Assume $n\geq 4$.
Let $B\leq G$ be a connected abelian subgroup. If $B$ is not unipotent then
$\dim B <n-1$.
\end{prop}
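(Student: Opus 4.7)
Plan: I split into two cases depending on whether $B$ contains a hyperbolic element, using that a non-unipotent $B$ automatically centralizes a non-trivial semisimple element. Since $B$ is not unipotent, some $b\in B$ has non-trivial semisimple Jordan component $s := b_s \in G$, and because the Jordan decomposition commutes with centralizers, $s \in Z_G(B)$, so $B \subseteq Z_G(s)$. By the dichotomy of semisimple elements in a rank-one group recalled in the bullet list, $s$ is either hyperbolic or elliptic.

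Suppose first that $s$ is hyperbolic. Its axis $c \subset X$ is intrinsically characterized as the minimum set of the displacement function $x \mapsto d(x, s\cdot x)$, so every element of $Z_G(s)$ preserves $c$. Therefore $B \subseteq \mathrm{Stab}(c)$, which by the paragraph above embeds naturally in $\BR \times SO_{n-1}(\BR)$. The stated bound gives $\dim B \leq 1 + [\frac{n-1}{2}]$; a short arithmetic check verifies that $1 + [\frac{n-1}{2}] \leq n-2 < n-1$ whenever $n \geq 4$.

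Suppose now that $s$ is elliptic. I may assume $B$ contains no hyperbolic element, for otherwise the previous case applies. If $B$ consists entirely of elliptic elements, then it is contained in a maximal compact subgroup $K_{x_0} \hookrightarrow SO_n(\BR)$, and by Remark \ref{rmk} we get $\dim B \leq [\frac{n}{2}] < n-1$. Otherwise $B$ contains a non-trivial unipotent element $u$. Using that in rank one a unipotent has a unique fixed point $p \in X(\infty)$ and that commuting elements must share this fixed point, every element of $B$ must stabilize $p$, i.e.\ $B \subseteq P_p = M_p A_p N_p$. Any element of $P_p$ with non-trivial $A_p$-component is hyperbolic (its semisimple part translates along the axis from $p$ to the other endpoint), so $B \subseteq M_p N_p$. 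Projecting to the compact group $M_p$ and using that $B$ is connected abelian, I obtain a decomposition $B = T \cdot U$ with $T$ a non-trivial compact torus in $M_p$ containing the image of $s$ and $U = B \cap N_p$. Since $G$ is center-free, $Z_G(N_p) = N_p$ and hence $T$ acts faithfully on $\mathfrak{n}_p$ via the adjoint action; because a faithful compact torus action on a real vector space has weights spanning $\mathfrak{t}^*$ and coming in two-dimensional real pairs, the fixed subspace $\mathfrak{n}_p^T$ has codimension at least $2\dim T$. Therefore $\dim U \leq \dim N_p - 2\dim T = (n-1) - 2\dim T$, and so $\dim B \leq \dim T + \dim U \leq n - 1 - \dim T \leq n - 2$, as required.

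The hard part is making the elliptic case fully rigorous: in particular, justifying the product decomposition $B = T \cdot U$ for a connected Lie subgroup rather than an algebraic one (one may need to pass to the closure of $B$, which remains connected, abelian, and non-unipotent while only increasing the dimension), and verifying the codimension-two estimate on $\mathfrak{n}_p^T$ uniformly across the four rank-one families (real, complex, quaternionic, and octonionic hyperbolic), even when $N_p$ is a Heisenberg-type nilpotent group whose abelian subgroups are constrained beyond a mere dimension count. The cleanest route is probably to work at the Lie-algebra level with the restricted root-space decomposition of $\mathfrak{g}$ with respect to a Cartan subalgebra containing the infinitesimal generator of $s$, so that the faithful $T$-action on $\mathfrak{n}_p$ produces the required even-dimensional pairing of non-trivial weights.
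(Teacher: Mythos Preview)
Your hyperbolic case is exactly the paper's. For the elliptic case the paper does something much shorter: it uses that $\text{Fix}(s)$ is a $B$-invariant totally geodesic sub-symmetric space of some dimension $d<n$, so that $B$ embeds in $SO_{n-d}\times\text{Isom}\big(\text{Fix}(s)\big)$, and then finishes by induction on the dimension (noting that a connected unipotent subgroup of the isometry group of a $d$-dimensional rank-one space has dimension at most $d-1$). No parabolic decomposition, no torus--weight count.

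Your route through $M_pN_p$ and the torus action on $\mathfrak{n}_p$ can be made to work, but the splitting $B=T\cdot U$ as you wrote it is a genuine gap, not just missing detail. With $U=B\cap N_p$ and $T$ the projection of $B$ to $M_p$, there is no reason $T$ centralizes $U$: for $b=mn\in B$ and $u\in U$ the relation $[b,u]=1$ only gives $m(nun^{-1})m^{-1}=u$, not $mum^{-1}=u$, so you cannot place $\mathfrak{u}$ inside $\mathfrak{n}_p^{T}$. The repair is to use the additive Jordan decomposition on the abelian Lie algebra $\mathfrak{b}$: the maps $X\mapsto X_s$ and $X\mapsto X_n$ are linear there, $\mathfrak{b}\subseteq\mathfrak{b}_s\oplus\mathfrak{b}_n$, and $\mathfrak{b}_s$ genuinely centralizes $\mathfrak{b}_n$. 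Then the closure of $\exp(\mathfrak{b}_s)$ is a compact torus which (after conjugation in $P_p$) sits in $M_p$ and centralizes the unipotent group $\exp(\mathfrak{b}_n)\subseteq N_p$; now your codimension estimate $\dim\mathfrak{n}_p^{T}\leq (n-1)-2\dim T$ applies and gives $\dim B\leq\dim\mathfrak{b}_s+\dim\mathfrak{b}_n\leq n-2$. This is close in spirit to your last paragraph, but the essential point is that the splitting has to come from Jordan components, not from intersecting $B$ with $N_p$.
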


\begin{proof}
By Lie's theorem, $B$ is triangulable over $\BC$. Thus, since $B$ is not
a unipotent group, there is a semisimple element $a\in B$.
The isometry $a$ is either elliptic or hyperbolic. If $a$ is hyperbolic, then as $B$ is abelian, any $b\in B$
preserves the axis of $a$. By the above paragraph we get
$$
 \dim B\leq [\frac{n-1}{2}]+1< n-1
$$
as $n\geq 4$.

If $a$ is elliptic then its set of fixed points $\text{Fix}(a)$ is a totally geodesic sub-symmetric space
of dimension $d=\dim \big(\text{Fix}(a)\big)$ which is $B$-invariant. Hence $B$ embeds in
$\SO_{n-d}\times\text{Isom}\big(\text{Fix}(a)\big)$ and the result follows by an inductive argument on the
dimension, keeping in mind that that any connected unipotent subgroup of $\text{Fix}(a)$ has dimension
$\leq d-1$.
\end{proof}

Let $p\in X(\infty )$ be the fixed point of $\gC_T$,
let $P\leq G$ be the parabolic group which corresponds to $p$
and let $P=MAN$ be the Langland's decomposition of $P$. Then $N$ is the
maximal unipotent group which corresponds to $p$, and $MN$ can be characterized
as the subgroup of $G$ which preserve the horospheres around $p$, and we have
$\gC_T=\gC\cap MN$, and $\gC_T^0 = \Gamma \cap N$. Additionally, $\gC_T$ (resp. $\gC_T^0$) is a cocompact
lattice in $MN$ (resp. in $N$) because $\gC_T\backslash N\cdot x$ is
homeomorphic to $T$ (where $x\in X$ is an arbitrary point and $N\cdot x$ the corresponding horosphere.
In particular any subgroup
of finite index $B\leq\gC_T^0$ is Zariski-dense in the unipotent group $N$,
and thus, such subgroup $B$ is not contained in a connected proper subgroup of
$N$. As $N$ acts simply transitively on each horosphere, its dimension is $n-1$.
So we see that $\text{span}\log (B )$ is $n-1$-dimensional and conclude:

\begin{clm}\label{bigdim}
Assume $n\geq 4$. No subgroup of finite index of $\gC_T$ is contained in a connected abelian
subgroup of $G$ of dimension $<n-1=\dim (N)$.
\end{clm}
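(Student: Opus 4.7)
The plan is to pass to the Zariski closure of $A$ in $G$, trap $N$ inside it, and then walk back to control $A$ itself.

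I would begin by supposing for contradiction that some finite-index $B\leq\gC_T$ lies inside a connected abelian Lie subgroup $A\leq G$ with $\dim A<n-1$. Setting $B':=B\cap\gC_T^0$, this has finite index in $\gC_T^0$, and by the remark just before the claim it is Zariski-dense in $N$. Letting $\ol{A}$ denote the Zariski closure of $A$ in $G$, which is an abelian algebraic subgroup, the inclusion $B'\subset\ol{A}$ (with $\ol{A}$ Zariski closed) gives $N\subset\ol{A}$.

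I would then split the argument according to whether $N$ is abelian. If $N$ is non-abelian, as in the complex, quaternionic and Cayley hyperbolic cases where $N$ is Heisenberg-type, the embedding of the non-abelian $N$ into the abelian $\ol{A}$ already yields a contradiction. Otherwise $G$ is (locally) $\mbox{Isom}(\BH^n)$ and $N\cong\BR^{n-1}$. In this case I would decompose $\ol{A}$ via its Jordan decomposition as $\ol{A}=\ol{A}_u\cdot\ol{A}_s$. Because the unipotent $N$ lies in $\ol{A}_u$, and conversely $\ol{A}_u$ is itself unipotent, maximality of $N$ among unipotent subgroups of $G$ forces $\ol{A}_u=N$. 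Abelianness of $\ol{A}$ places $\ol{A}_s$ inside $Z_G(N)$, which a short root-space computation in $\mathfrak{so}(n,1)$ identifies with $N$; as $\ol{A}_s$ is semisimple and $N$ is unipotent, $\ol{A}_s$ must be trivial, so $\ol{A}=N$.

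To finish, I would use $A\subset N\cong\BR^{n-1}$: every connected Lie subgroup of $\BR^{n-1}$ is the image under $\exp$ of a linear subspace $\mathfrak{a}\subset\mathfrak{n}$, and the inclusion $B'\subset A$ combined with $\text{span}\log B'=\mathfrak{n}$ then forces $\mathfrak{a}=\mathfrak{n}$, so $A=N$ has dimension $n-1$, contradicting $\dim A<n-1$. The potentially delicate point is that a Lie subgroup can be Zariski-dense in a strictly larger algebraic group, so identifying $\ol{A}$ does not automatically pin down $A$; what saves us is that once $\ol{A}=N\cong\BR^{n-1}$, connected Lie subgroups of $\BR^{n-1}$ coincide with their Zariski closures, and no gap in dimension can open up.
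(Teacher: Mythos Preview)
Your proof is correct. In the paper the claim is stated as an immediate consequence of the preceding paragraph, where it is observed that any finite-index $B'\leq\gC_T^0$ is Zariski-dense in $N$, hence not contained in any connected proper subgroup of $N$, and that $\text{span}\log(B')=\mathfrak{n}$ is $(n-1)$-dimensional; the passage from these facts to the assertion about arbitrary connected abelian subgroups of $G$ is not spelled out. Your argument via the Zariski closure $\ol{A}$ and, in the real-hyperbolic case, the Jordan decomposition $\ol{A}=\ol{A}_u\cdot\ol{A}_s$ together with the identification $Z_G(N)=N$, is a legitimate and careful way to supply this step. A mild shortcut in the abelian-$N$ case: since $A$ is abelian and contains $B'$, it centralizes $B'$, and as centralizers are Zariski closed it centralizes $N=\ol{B'}^Z$; thus $A\subset Z_G(N)=N$ directly, without passing through $\ol{A}$ or its Jordan decomposition, and your final step via $\text{span}\log(B')=\mathfrak{n}$ then finishes the argument. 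Either route ultimately rests on the same computation of $Z_G(N)$, which you carry out correctly.
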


The following claim is obvious.

\begin{clm}\label{12}
For a finitely generated group $\gD$, and a convergent sequence of
homomorphisms $f_n:\gD\to G$, the property: ``$f_n(\gD )$ is contained in a
connected abelian group of dimension $\leq m$'' is preserved by taking the limit.
\end{clm}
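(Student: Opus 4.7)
The plan is to establish the claim via compactness of the Grassmannian of abelian subalgebras of $\mathfrak{g}$, combined with a continuity argument for the exponential map. Denote $f=\lim f_n$, fix a finite generating set $\gs_1,\ldots,\gs_k$ of $\gD$, and for each $n$ let $A_n \subset G$ be a connected abelian subgroup of dimension $d_n \leq m$ containing $f_n(\gD)$, with Lie algebra $\mathfrak{a}_n=\text{Lie}(A_n)$. First, since multiplication and inversion are continuous in $G$, the commutation relations $[f_n(\gs_i),f_n(\gs_j)]=e$ pass to the limit, so $f(\gD)$ is already abelian in $G$. Passing to a subsequence we may assume $d_n=d$ is constant, and by compactness of $\text{Gr}_d(\mathfrak{g})$ we extract a further subsequence so that $\mathfrak{a}_n \to \mathfrak{a}$ for some $d$-dimensional subspace $\mathfrak{a}\subset\mathfrak{g}$. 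The condition $[X,Y]=0$ is closed, so $\mathfrak{a}$ is again an abelian subalgebra, of dimension $\leq m$; denote by $A$ the corresponding connected Lie subgroup.

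It remains to show $f(\gs_i)\in A$ for each generator. Because the exponential map restricted to an abelian subalgebra is a surjective homomorphism onto the corresponding connected abelian subgroup, we may write $f_n(\gs_i)=\exp(Y_{n,i})$ with $Y_{n,i}\in\mathfrak{a}_n$. If the $Y_{n,i}$ can be chosen to remain bounded in $\mathfrak{g}$ as $n\to\infty$, then after a further subsequence $Y_{n,i}\to Y_i \in \mathfrak{a}$, and continuity of $\exp$ gives $f(\gs_i)=\exp(Y_i)\in A$. Since $f$ is determined by its values on generators, this would yield $f(\gD)\subset A$, a connected abelian subgroup of dimension $\leq m$.

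The main obstacle is obtaining this uniform bound on $Y_{n,i}$. A priori, $Y_{n,i}$ is determined only modulo the kernel $\Lambda_n=\ker(\exp|_{\mathfrak{a}_n})\subset\mathfrak{a}_n$, which is a discrete subgroup that may be nontrivial. Since the sequence $f_n(\gs_i)$ converges, it stays in a compact subset of $G$, and since the family of pairs $(\mathfrak{a}_n,\Lambda_n)$ moves in a compact parameter space (the space of abelian subalgebras together with their kernel lattices, varying continuously in the Chabauty topology), one may choose representatives $Y_{n,i}$ in a fundamental domain of $\Lambda_n$ in $\mathfrak{a}_n$ of uniformly bounded diameter. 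This gives the required control on $Y_{n,i}$ and closes the argument.
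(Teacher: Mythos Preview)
The paper declares this claim ``obvious'' and offers no argument, so there is nothing to compare your route to directly. Your strategy --- pass to a subsequential limit $\mathfrak a=\lim\mathfrak a_n$ in the Grassmannian of abelian subalgebras and then lift the generators through $\exp$ --- is natural, but the final paragraph contains a genuine gap: neither the asserted continuity of $\mathfrak a\mapsto\Lambda=\ker(\exp|_{\mathfrak a})$ nor the uniform boundedness of fundamental domains for $\Lambda_n$ actually holds, and your argument collapses at exactly this point.

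Here is an explicit failure inside a semisimple group. In $G=\SL_2(\BR)$ take $\mathfrak a_n=\BR X_n$ with
\[
X_n=\begin{pmatrix}0&1\\-\gep_n^{\,2}&0\end{pmatrix},\qquad \gep_n\searrow 0.
\]
Each $X_n$ is elliptic, $A_n=\exp(\mathfrak a_n)$ is a circle, and the generator of $\Lambda_n$ has norm of order $1/\gep_n\to\infty$; the fundamental domains blow up rather than stay bounded. With the constant sequence $f_n(\gs)\equiv -I=\exp\bigl((\pi/\gep_n)X_n\bigr)$, the only available lifts $Y_n=(\pi/\gep_n)X_n$ diverge, and the Grassmannian limit $\mathfrak a=\BR\begin{pmatrix}0&1\\0&0\end{pmatrix}$ is simply the \emph{wrong} witness: $\exp(\mathfrak a)$ is the upper unipotent group and $-I\notin\exp(\mathfrak a)$. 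The claim itself survives here (since $-I\in\SO(2)$), but your construction does not find it. The dual pathology, with $\Lambda_n=\{0\}$ for every $n$ so that no bounded fundamental domain even exists, already occurs in $S^1\times\BR$: the line of slope $1/n$ in $\mathfrak g=\BR^2$ contains $(n,1)$, whose exponential is the fixed element $(1,1)$, yet $\mathfrak a_n$ converges to the horizontal axis, whose exponential $S^1\times\{0\}$ misses $(1,1)$.

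The moral is that compactness of the Grassmannian gives no control whatsoever over the covolume or the rank of the exponential kernels $\Lambda_n$, so there is no reason for the lifts $Y_{n,i}$ to stay bounded, and the subalgebra $\lim\mathfrak a_n$ need not be the one that witnesses the claim for the limit homomorphism. A correct argument must select the limiting abelian subgroup by some other mechanism than tracking the given $A_n$.
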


We can now prove the following main proposition:

\begin{prop}
Assume $n\geq 4$. If $\rho$ is a sufficiently small deformation of $\rho_0$, then the group $\rho (\gC_T)$ is contained in a parabolic group. Moreover, $\rho (\gC_T)$ contains no hyperbolic elements.
\end{prop}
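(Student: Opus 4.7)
My plan is to proceed by contradiction. Assume there exists a sequence of deformations $\rho_k\to\rho_0$ in $\mathcal{R}(\gC,G)$ such that, for each $k$, either $\rho_k(\gC_T)$ is not contained in any parabolic subgroup of $G$, or $\rho_k(\gC_T)$ contains a hyperbolic element. The strategy is to produce, along a subsequence, a fixed finite-index subgroup $\gL^\ast\leq\gC_T$ such that $\rho_k(\gL^\ast)$ is contained, for every $k$, in a connected abelian subgroup of $G$ of dimension strictly less than $n-1$; Claim \ref{12} will then transfer this property to $\rho_0(\gL^\ast)=\gL^\ast$, contradicting Claim \ref{bigdim}.

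The starting point is that $\gC_T^0$ is nilpotent (being a lattice in the unipotent group $N$), so each $\rho_k(\gC_T^0)$ is a nilpotent subgroup of $G$. I will invoke the classification of nilpotent subgroups of a rank-one semisimple Lie group: after replacing $\rho_k(\gC_T^0)$ by a subgroup of index bounded in terms of $G$ alone---namely the intersection of $\rho_k(\gC_T^0)$ with the identity component $H_k$ of its Zariski closure---the resulting image falls in exactly one of three geometric cases: (a) $H_k$ is contained in the unipotent radical $N_k$ of a parabolic subgroup $P_k$ stabilizing a unique boundary point $p_k\in X(\infty)$; (b) $H_k$ is contained in a maximal compact subgroup of $G$ (the stabilizer of a point of $X$); or (c) $H_k$ is contained in the stabilizer of a geodesic, embedding it in $\BR\times SO_{n-1}(\BR)$. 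Denote by $\gL_k\leq\gC_T$ the corresponding finite-index subgroup, whose index is bounded uniformly in $k$.

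In case (a), I claim that the conclusion of the proposition already holds for $\rho_k$, which is the desired contradiction. Since $\gC_T^0$ is normal in $\gC_T$ (because $N$ is the unipotent radical of $P$, hence normal in $MN\supseteq\gC_T$), the group $\rho_k(\gC_T)$ normalizes $\rho_k(\gC_T^0)$, hence normalizes its Zariski closure and the identity component $H_k$. Since in rank one any nontrivial connected unipotent subgroup is contained in a unique maximal unipotent, the normalizer of $H_k$ in $G$ lies in $P_k=\mbox{Norm}_G(N_k)$, so $\rho_k(\gC_T)\subseteq P_k$. If some $\rho_k(\eta)\in\rho_k(\gC_T)$ were hyperbolic, then a power $\rho_k(\eta^j)$ would lie in $\rho_k(\gL_k)\subseteq H_k\subseteq N_k$ and be hyperbolic, impossible inside a unipotent group. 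So we are forced into cases (b) or (c), and Lemma \ref{index} (with Remark \ref{rmk} for (b)) produces a further subgroup $\gL_k^\ast\leq\gL_k$ of index at most $i(n)$ whose $\rho_k$-image lies in a connected abelian subgroup of $G$ of dimension at most $[\frac{n-1}{2}]+1$, which is strictly less than $n-1$ since $n\geq 4$ (cf.\ Proposition \ref{smalldim}).

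Finally, because $\gC_T$ is finitely generated, it has only finitely many subgroups of any given bounded index; after passing to a subsequence we may assume $\gL_k^\ast=\gL^\ast$ is a fixed finite-index subgroup of $\gC_T$. Claim \ref{12}, applied to the convergent sequence of restricted homomorphisms $\rho_k|_{\gL^\ast}\to\rho_0|_{\gL^\ast}$, then forces $\rho_0(\gL^\ast)=\gL^\ast$ itself into a connected abelian subgroup of dimension $<n-1$, contradicting Claim \ref{bigdim}. The main obstacle I expect is to secure the classification of nilpotent subgroups of rank-one $G$ with the required uniform bound on the index coming from the identity-component reduction; once that is in hand, the clean contradiction in case (a) and the concluding pigeonhole/limit step are routine.
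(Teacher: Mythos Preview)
Your endgame---force $\rho_k(\gC_T^0)$ into a point-stabilizer or a geodesic-stabilizer, invoke Lemma~\ref{index} and Proposition~\ref{smalldim} to land a bounded-index subgroup in a connected abelian group of dimension $<n-1$, then pigeonhole and pass to the limit via Claim~\ref{12} to contradict Claim~\ref{bigdim}---is correct and is exactly the paper's endgame as well. The gap is in your trichotomy and the index bound you flag. First, the trichotomy for $H_k$ is incomplete: writing the connected nilpotent algebraic group as $H_k=T_k\times U_k$ (central torus times unipotent radical), the case where $T_k$ is a nontrivial compact torus and $U_k\neq 1$ fits none of your (a),(b),(c), since $U_k$ fixes no point of $X$ and exactly one point of $X(\infty)$, while $H_k\not\subset N_k$. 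Second, the index $[\rho_k(\gC_T^0):\rho_k(\gC_T^0)\cap H_k]$ is bounded only by the order of the component group of the Zariski closure, and there is no bound on this in terms of $G$ alone: finite cyclic subgroups of a maximal compact torus already produce arbitrarily large component groups. So the obstacle you anticipated is genuine.

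Both issues dissolve once you drop the passage to $H_k$ and run the trichotomy for $\rho_k(\gC_T^0)$ itself, with case~(a) enlarged to ``$U_k\neq 1$''. The full Zariski closure $Z_k$ (still nilpotent) normalizes its characteristic subgroups $T_k,U_k$. If $U_k\neq 1$, its unique fixed point $p_k\in X(\infty)$ is $Z_k$-invariant, so $\rho_k(\gC_T^0)\subset Z_k\subset P_{p_k}$; and no hyperbolic $h$ can lie in $Z_k$, since $\langle h\rangle$ together with $U_k$ would generate a non-nilpotent $ax{+}b$-type subgroup of the nilpotent group $Z_k$ (conjugation by $h$ acts on $\mathrm{Lie}(N_{p_k})$ with all eigenvalues of modulus $\neq 1$). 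Your case-(a) argument (normality of $\gC_T^0$ in $\gC_T$, powers of hyperbolics are hyperbolic) then finishes. If $U_k=1$ and $T_k$ is compact, $Z_k$ itself is compact (finitely many cosets of $T_k$) and fixes a point of $X$; if $T_k$ is noncompact its split part has a unique axis and $Z_k$, normalizing $T_k$, stabilizes it. Taking $\gL_k=\gC_T^0$ throughout, only the bound $i(n)$ from Lemma~\ref{index} enters and your pigeonhole/limit step goes through unchanged. The paper reaches the same three cases by a different device: it selects central elements $z_i\in\gC_T^0$ and branches on the isometry type of $\rho(z_i)$, recursively restricting to $\mathrm{Fix}(\rho(z_i))$ when it is elliptic---this avoids Zariski closures entirely but is otherwise the same argument.
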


\begin{proof}
Let $z_1\in\gC_T^0$ be a central element.
If $\rho (z_1)$ is parabolic then $\rho (\gC_T^0)$ is contained in the parabolic group corresponding to the unique fixed point of
$\rho (z_1)$ at $X(\infty )$.

Assume that $\rho (z_1)$ is not
parabolic. Then it is either elliptic or hyperbolic.
But if $\rho (z_1 )$ is hyperbolic then, as it is central,
$\rho (\gC_T^0 )$ is contained in the stabilizer group of the axis of
$\rho (z_1)$. However, by lemma
\ref{index} we obtain that $\rho (\gC_T^0)$ contains
a subgroup of index $\leq i$ which is contained in a connected abelian group,
which by Proposition \ref{smalldim} has dimension $<n-1$. Since $ \gC_T^0$ is
finitely generated (being a lattice in $N$) it has only finitely many subgroups
of index $\leq i$. Therefore the property ``to have a subgroup of index
$\leq i$ which is contained in a connected abelian group of dimension
$\leq n-2$'' is preserved by taking a limit. This, however, contradicts Claim \ref{bigdim}.

Suppose therefore that $\rho (z_1 )$ is elliptic. Then its set of fixed points is a sub-symmetric space $X_1$
of smaller dimension, on which $\rho (\gC_T^0)$ acts nilpotently, and we
continue by taking $z_2\in\gC_T^0$ which is central with respect to this action.
If $\rho (z_2 )$ is parabolic we conclude that $\rho (\gC_T^0)$ is contained in the parabolic group corresponding to the unique fixed point of
$\rho (z_2)$ at $X_1 (\infty ) \subset X(\infty )$.
If $\rho (z_2)$ is not parabolic then it cannot act parabolically on $X_1$. As above $\rho (z_2)$ can not acts hyperbolically.
If $z_2$ acts elliptically, we continue by induction, defining $X_2 = {\text Fix} (\rho (z_1 )) \cap {\text Fix} (\rho (z_2))$. By this way we conclude that $\rho (\gC_T^0)$ is contained in a parabolic group.

Now if all the $z_i$'s constructed above act elliptically, then $\rho (\gC_T^0)$ is contained in a compact group, and
(letting $\rho \rightarrow \rho_0$) we derive a
contradiction to Remark \ref{rmk}, Proposition \ref{smalldim} and Claim \ref{bigdim} as above. Therefore one of the $z_i$'s, say $z_{i_0}$,
must act as a parabolic. This implies that $\rho (\gC_T^0)$ has a unique fixed point at infinity, and since $\gC_T^0$ is normal in $\gC_T$
it follows that $\rho (\gC_T)$ fix this point and hence contained in the corresponding parabolic group.

Now if $\rho (\gC_T)$ would contain a hyperbolic element then so would $\rho (\gC_T^0)$ since a power of hyperbolic is hyperbolic.
However hyperbolic element can not commute with parabolic, while the action of any element on the corresponding subsymmetric space
$X_{i_0-1}$ commute with the action of the parabolic element $\rho (z_{i_0})$.
Therefore $\rho (\gC_T)$ does not contain hyperbolic elements.
\end{proof}

Since the action of $G$ at $X(\infty )$ is continuous, the unique fixed
point $x$ of $\rho (\gC_T)$ is close to the fixed point $x_0$ of $\gC_T$,
and since the orbit map $G\to G/P=X(\infty )$ is open,
there is some $g\in G$ near $1$ which takes $x_0$ to $x$. Conjugating $\rho$ by
$g$ we get $\rho^g$ which is again a small deformation. Moreover,
$\rho^g |_{\gC_T}$ is a deformation inside the parabolic subgroup $P$ which corresponds to the
cusp.

\begin{clm}
The group $\rho^g(\gC_T )$ is contained in $MN$.
\end{clm}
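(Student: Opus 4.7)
The plan is to use the two conclusions of the preceding proposition together with the conjugation by $g$: after conjugation we have $\rho^g(\Gamma_T) \subset P = MAN$, the parabolic at the cusp point $x_0$, and $\rho^g(\Gamma_T)$ contains no hyperbolic elements. Since $MN$ is exactly the subgroup of $G$ preserving every horosphere around $x_0$, the claim is equivalent to saying that the projection $\pi : P \to P/MN \cong A$ vanishes on $\rho^g(\Gamma_T)$. I would proceed by contradiction: if some $\rho^g(\gamma) = man$ has $a \neq e$, I will show that this element is hyperbolic, which directly contradicts the preceding proposition.

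The core of the argument is the following geometric lemma: any $p = man \in MAN$ with $a \neq e$ is hyperbolic. To prove this, consider the Busemann function $b$ centered at $x_0$, whose level sets are the horospheres around $x_0$. Since $m \in M$ and $n \in N$ both preserve each horosphere, while $a \in A$ shifts horospheres by a fixed distance $|\log a| > 0$ in a suitable normalization, we have $b(y) - b(p \cdot y) = c$ for every $y \in X$, with $|c| = |\log a| > 0$. Because $b$ is $1$-Lipschitz, $d(y, p \cdot y) \geq |c| > 0$ uniformly in $y \in X$. Thus the displacement function of $p$ has strictly positive infimum, which by the rank-one trichotomy of isometries recalled at the beginning of this section rules out both the elliptic case (infimum attained and equal to $0$) and the parabolic case (infimum equal to $0$ but not attained). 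Hence $p$ must be hyperbolic.

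Applying this lemma to every $\rho^g(\gamma)$ and invoking the no-hyperbolic-elements part of the preceding proposition forces $\pi(\rho^g(\gamma)) = e$ for all $\gamma \in \Gamma_T$, which is exactly $\rho^g(\Gamma_T) \subset MN$. The main obstacle, and the only geometric input that is not already in place, is the displacement estimate identifying a nontrivial $A$-component with positive translation length; once that is established, the claim is an immediate consequence of the previous proposition and the classification of rank-one isometries.
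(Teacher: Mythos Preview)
Your proof is correct and follows exactly the paper's approach: the paper's own proof is the single sentence ``This follows immediately from the previous proposition since $MN$ is exactly the set of elements in $P$ which are not hyperbolic,'' and your argument simply supplies the verification of (the needed direction of) that characterization via the Busemann-function displacement estimate. The only minor point left implicit is that ``contains no hyperbolic elements'' passes from $\rho(\Gamma_T)$ to the conjugate $\rho^g(\Gamma_T)$, which is immediate since hyperbolicity is a conjugation-invariant property.
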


\begin{proof}
This follows immediately from the previous proposition since $MN$ is exactly the set of elements in
$P$ which are not hyperbolic.
\end{proof}

We shall now use Ehresmann-Thurston's principle (Proposition \ref{GWT}) to deduce:

\begin{prop}
The group $\rho^g(\gC_T)$ is a cocompact lattice in $MN$, and $\rho^g:\gC_T\to\rho^g(\gC_T)$ is an
isomorphism.
\end{prop}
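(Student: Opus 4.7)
The plan is to deduce this proposition directly from the previously established Proposition \ref{GWT} (the Ehresmann--Thurston uniform version for a general connected Lie group) applied inside the connected Lie group $MN$ rather than inside $G$.

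The first step is to observe that all the hypotheses of Proposition \ref{GWT} are in place for the pair $(MN,\gC_T)$: $MN$ is a connected Lie subgroup of $G$, and it has already been noted that $\gC_T$ is a uniform lattice in $MN$ (since $\gC_T\backslash N\cdot x$ is homeomorphic to the compact manifold $T$, and $M$ has finite covolume in $MN$). Next, by the preceding claim, $\rho^g(\gC_T)\subset MN$, so $\rho^g|_{\gC_T}$ is a homomorphism $\gC_T\to MN$, to be compared with the inclusion $\gC_T\hookrightarrow MN$.

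The second step is to check that $\rho^g|_{\gC_T}$ is a sufficiently small deformation of the inclusion when viewed in the deformation space $\mathcal{R}(\gC_T,MN)$. Let $\gS\subset\gC_T$ be a finite generating set. Since $\rho$ is close to $\rho_0$ in $\mathcal{R}(\gC,G)$ and $g\in G$ is close to $1$, the conjugate $\rho^g$ is also close to $\rho_0$ in $\mathcal{R}(\gC,G)$; hence for every neighborhood $U$ of $1$ in $G$ we may arrange $\rho^g(\gc)\in\gc\cdot U$ for all $\gc\in\gS$. But both $\gc^{-1}\rho^g(\gc)$ lies in $MN$ (since $\gc,\rho^g(\gc)\in MN$), so it lies in $U\cap MN$. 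As $U$ shrinks over identity neighborhoods in $G$, the sets $U\cap MN$ form a basis of identity neighborhoods in $MN$ (the subspace topology on $MN$ coincides with its Lie group topology since it is a closed subgroup). Thus $\rho^g|_{\gC_T}$ is arbitrarily close to the inclusion in $\mathcal{R}(\gC_T,MN)$ provided $\rho$ was chosen small enough to begin with.

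The third and final step is to invoke Proposition \ref{GWT} for the uniform lattice $\gC_T$ of the connected Lie group $MN$: it follows that $\rho^g(\gC_T)$ is again a uniform lattice in $MN$ and that $\rho^g\colon\gC_T\to\rho^g(\gC_T)$ is an isomorphism, which is exactly the statement of the proposition. The only potentially delicate point of the argument is the second step---making sure that "small in $G$" translates to "small in $MN$"---but this is straightforward because $MN$ is a closed Lie subgroup, so no real obstacle arises.
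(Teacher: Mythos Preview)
Your proposal is correct and follows exactly the approach the paper intends: the paper introduces this proposition with the line ``We shall now use Ehresmann-Thurston's principle (Proposition \ref{GWT}) to deduce:'' and gives no further details, so you have simply filled in the verification that Proposition \ref{GWT} applies to the pair $(MN,\gC_T)$. Your second step, checking that smallness in $G$ restricts to smallness in the closed subgroup $MN$, is the only point the paper leaves implicit, and your argument for it is fine; the parenthetical justification that $\gC_T$ is cocompact in $MN$ is slightly garbled (the phrase ``$M$ has finite covolume in $MN$'' is not quite what you want---rather, $M$ is compact in rank one, so $MN$ acts on each horosphere with compact point-stabilizers), but the paper has already recorded this fact, so you may simply cite it.
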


Since $\rho^g(\gC_T)$ (and $\rho (\gC_T)$) is discrete and torsion free,
it acts properly and freely on each horosphere $N\cdot x$, and hence $\rho^g (\gC_T)\backslash N\cdot x$
(and hence also $\rho (\gC_T ) \backslash N^{g^{-1}} \cdot x$) is homeomorphic to
$\gC_T\backslash N\cdot x$ - the boundary of our cusp.
This  implies that $\rho (\gC_T)$ is the fundamental group of some canonical cusp
homeomorphic to $T\times [0,\infty )$.
This completes the proof of the following theorem and thus completes the rank one case:

\begin{thm}\label{rk-1thm}
Let $G$ be a connected center free simple Lie group of rank one neither isomorphic
to $PSL_2(\BR )$ nor to $PSL_2(\BC )$, and let $\gC\leq G$ be a non-uniform
lattice. Then there is a neighborhood $\gO$ of the inclusion $\rho_0:\gC\to G$
in the deformation space $\mathcal{R}(\gC ,G)$ such that for any $\rho\in\gO$,
$\rho(\gC )$ is a lattice in $G$, $\rho :\gC\to\rho (\gC )$ is an isomorphism,
and $X/\rho (\gC )$ is homeomorphic to $X/\gC$.
\end{thm}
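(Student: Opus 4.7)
The plan is to assemble the pieces already assembled in this section into a single argument. The starting point is to reduce, via Selberg's lemma and the observation that a small deformation must stabilize any given finite-index subgroup, to the case where $\gC$ is torsion-free. Write $M=\gC\backslash X$ as the union of a fixed compact submanifold-with-boundary $M_0$ (obtained by truncating each cusp along a horosphere) and the collection of canonical cusps $T_j\times[0,\infty)$. A thickening $M_0 \cup (T_j\times[0,1))$ inherits a natural $(G,X)$-structure, and the Ehresmann-Thurston principle (Theorem \ref{prin}) provides, for any $\rho$ sufficiently close to $\rho_0$, a $(G,X)$-structure on the interior of $M_0$ whose holonomy is $\rho$.

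The main step is then to show that each cusp can be reglued. Fix a cusp with fundamental group $\gC_T$ and its unipotent finite-index subgroup $\gC_T^0$. The sequence of propositions just proved shows that, for $n=\dim X\geq 4$, $\rho(\gC_T)$ is contained in a parabolic subgroup $P'$ and contains no hyperbolic element, hence lies in $M'N'$, where $P'=M'A'N'$ is a Langlands decomposition. I would then invoke continuity of $G\to G/P=X(\infty)$ together with the openness of this map to find $g\in G$ close to $1$ carrying the fixed point of $\gC_T$ to that of $\rho(\gC_T)$, so that after replacing $\rho$ by its conjugate $\rho^g$ one has $\rho^g(\gC_T)\subset MN$ with the same $M,N$ that come from $\gC_T$.

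At this point I would apply the uniform version of Ehresmann-Thurston (Proposition \ref{GWT}) to the uniform lattice $\gC_T\leq MN$: the deformation $\rho^g|_{\gC_T}$ is a small deformation of the inclusion into the Lie group $MN$, so $\rho^g(\gC_T)$ is again a uniform lattice in $MN$ and $\rho^g|_{\gC_T}$ is an isomorphism. Consequently $\rho^g(\gC_T)$ acts freely and cocompactly on every horosphere $N\cdot x$ fixed by $P'$, so the quotient $\rho^g(\gC_T)\backslash N\cdot x$ is homeomorphic to $T=\gC_T\backslash N\cdot x$ and the quotient of a horoball by $\rho^g(\gC_T)$ is a canonical cusp $C'$ with finite volume.

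Finally I would glue $C'$ to the deformed compact core along the collar $T\times[0,1]$: the two induced $(G,X)$-structures on this collar have the same holonomy (the isomorphism $\rho^g|_{\gC_T}$) and, when $\rho$ is close to $\rho_0$, can be made to coincide, since both collars are realized as small deformations of the same $(G,X)$-structure on $T\times[0,1]$. Performing this gluing at each cusp produces a complete $(G,X)$-manifold $M'$ of finite volume homeomorphic to $M$ with holonomy $\rho$ (after simultaneous conjugation). Theorem \ref{complet} then identifies $M'$ with $\rho(\gC)\backslash X$, which yields the isomorphism $\rho:\gC\to\rho(\gC)$, the fact that $\rho(\gC)$ is a lattice, and the homeomorphism $X/\rho(\gC)\cong X/\gC$. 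I expect the subtlest point to be the matching of the two $(G,X)$-structures on the collar, which must be checked in an equivariant way so that the glued holonomy really is $\rho$ and not merely an abstract isomorphic representation.
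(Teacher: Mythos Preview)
Your proposal is correct and follows essentially the same route as the paper: reduce to the torsion-free case, truncate $M$ to a compact core $M_0$ with collar, apply the Ehresmann--Thurston principle to deform the $(G,X)$-structure on $M_0$, use the chain of results in this section (culminating in the proposition that $\rho(\gC_T)$ lies in a parabolic and contains no hyperbolic elements) together with openness of $G\to G/P$ to conjugate $\rho|_{\gC_T}$ into $MN$, invoke Proposition~\ref{GWT} to see that $\rho^g(\gC_T)$ is again a cocompact lattice in $MN$, and then glue the resulting canonical cusp back onto the deformed core. Your identification of the collar-matching as the delicate point is apt; the paper handles it in the same spirit, asserting that the $(G,X)$-structure on $T\times[0,1]$ induced from $M_0'$ coincides with one induced from a canonical cusp with holonomy $\rho(\pi_1(T))$, without spelling out the equivariant identification in more detail than you do.
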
  

From Theorem \ref{rk-1thm} and Mostow rigidity, we conclude local rigidity in the rank one
case.

\medskip

It is well known that if $\gC$ is a locally rigid lattice in the group of real points
$G=\BG (\BR )$ of some $\BQ$-algebraic group $\BG (\BC )\leq GL_n(\BC )$, then it can be 
conjugated by an element $g\in G$ into $\BG (K)$ for some number field $K$.
This follows from the fact that $\gC$ is finitely generated, and hence, the deformation space
$\mathcal{R}(\gC ,G)$ has the structure of a real algebraic variety $V$ which is defined over 
$\BQ$. If we let  $\overline{\BQ}$ denote the algebraic closure of $\BQ$ in $\BC$, then this 
implies that $V(\overline{\BQ}\cap\BR )$ is dense in $V(\BR )$ in the real topology. 
Thus we can find a point $\rho\in V(\overline{\BQ}\cap\BR )$ arbitrarily close to the inclusion 
$\rho_0$. As $\gC$ is locally rigid, $\rho$ is given by conjugation, and as $\gC$ is finitely 
generated, $\rho (\gC )$ lies in some algebraic number field.

Let now $G\cong SL_2(\BC )$, and let $\gC\leq G$ be a (torsion free) non-uniform lattice. 
Then $\gC$ is not locally rigid. For this reason Garland and Raghunathan had to deal with this
case separately in \cite{Gar-Rag}, section 8. Using the above presented proof, one can
treat the $SL_2(\BC )$ case in the same manner as the general case, since, as explained above, 
if $\rho$ is a small deformation of $\gC$ which sends unipotents to unipotents, then 
$\rho (\gC )$ is a lattice and
$\rho :\gC\to\rho (\gC )$ is an isomorphism, and hence by Mostow rigidity, it is given by 
conjugation. In fact, the fundamental group of each cusp is finitely generated, and it is enough
to choose such finite generating set for each cusp, and to require that $\rho$ sends all these 
finitely many elements to unipotents. Then any unipotent in $\gC$, being conjugate to some element
of the chosen fundamental group of some cusp, must also be sent to a unipotent.
 
We can choose a large finite generating set $\gS$ for $\gC$ which contains a generating set 
for a chosen fundamental group for each of the finitely many cusps. Then, we add to the set of 
relations defining $\mathcal{R}(\gC ,G)$ the conditions that every unipotent in $\gS$ must be sent
to unipotent, and call this subspace $\mathcal{R}_U(\gC ,G)\subset\mathcal{R}(\gC ,G)$. 
Since, the unipotents are the zeros of the $\BQ$-polynomial $(\text{Ad}(g)-1)^n$ we see that the 
space $\mathcal{R}_U(\gC ,G)$ has the structure of a real algebraic variety defined over $\BQ$.
We conclude that:

\begin{cor}[Garland-Raghunathan]\label{SL_2(C)}
Let $\BG$ be a $\BQ$-algebraic linear group with $G=\BG (\BR )$ isomorphic to $PSL_2(\BC )$, and let 
$\gC$ be a non-uniform lattice in $G$. Then $\gC$ is conjugate to a subgroup of
$\BG (K)$ for some number field $K\leq \BR$.
\end{cor}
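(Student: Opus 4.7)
The approach is to run the standard ``locally rigid implies defined over a number field'' argument, but inside the subvariety $\mathcal{R}_U(\gC,G)$ on which a Selberg-style local rigidity does hold, rather than on all of $\mathcal{R}(\gC,G)$ where $\gC$ is genuinely not locally rigid.

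First I would set up the algebraic picture. The lattice $\gC$ is finitely generated, so fix a finite generating set $\gS$ containing a generating set for the fundamental group of each of the finitely many cusps of $M=\gC\backslash\BH^3$. Then $\mathcal{R}(\gC,G)$ is the set of real points of a $\BQ$-affine variety $V\subset G^\gS$ cut out by the relations among the elements of $\gS$. Let $\gS_u\subset\gS$ consist of those chosen cusp generators which are unipotent in $\gC$. The requirement ``$\rho(\gs)$ is unipotent for every $\gs\in\gS_u$'' is the vanishing of the $\BQ$-polynomial map $\rho\mapsto(\text{Ad}(\rho(\gs))-1)^n$ and therefore defines a $\BQ$-subvariety $V_U\subset V$ with $V_U(\BR)=\mathcal{R}_U(\gC,G)$.

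Second I would establish local rigidity inside $\mathcal{R}_U$: for $\rho\in\mathcal{R}_U$ sufficiently close to $\rho_0$, there exists $g\in G$ with $\rho=g\rho_0 g^{-1}$. The new ingredient compared with Section \ref{rank-1} is that the constraint on $\gS_u$ forces every unipotent element of $\gC$, not just the chosen ones, to be sent to a unipotent. Indeed, the elements of $\gS_u$ generate each cusp subgroup up to finite index; two commuting unipotents of $PSL_2(\BC)$ share their unique fixed point at infinity, so the image of each cusp subgroup lies in a single unipotent radical $N$; and any parabolic of $\gC$ is $\gC$-conjugate to an element of one of the cusp subgroups, hence is sent to a unipotent. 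With this input, the obstruction that prevented the argument of Section \ref{rank-1} from working in dimension three disappears: $\rho(\gC_T)$ sits inside a parabolic subgroup and contains no hyperbolic elements, and the rank-one argument combined with the Ehresmann--Thurston principle yields that $\rho(\gC)$ is a non-uniform lattice, $\rho:\gC\to\rho(\gC)$ is an isomorphism, and $X/\rho(\gC)$ is homeomorphic to $M$. Since $G$ is not locally isomorphic to $PSL_2(\BR)$, Mostow rigidity then forces $\rho$ to be conjugation by some $g\in G$.

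Third I would finish with the standard density argument. The variety $V_U$ is defined over $\BQ$, so $V_U(\ol{\BQ}\cap\BR)$ is dense in $V_U(\BR)$ near any smooth real point (apply the implicit function theorem to $\BQ$-rational defining equations and use density of $\BQ$ in $\BR$). Pick $\rho\in V_U(\ol{\BQ}\cap\BR)$ inside the local-rigidity neighborhood from the second step. Then $\rho=g\rho_0 g^{-1}$ for some $g\in G$, so $\rho(\gS)=g\gS g^{-1}$ is a finite set of matrices with entries in $\ol{\BQ}\cap\BR$; let $K\subset\BR$ be the number field generated over $\BQ$ by these finitely many algebraic entries. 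Then $g\gC g^{-1}\subset\BG(K)$, as required. The main obstacle is the second step, namely propagating the unipotency constraint from $\gS_u$ to every parabolic element of $\gC$ so that the argument of Section \ref{rank-1} applies in dimension three; once this is granted, the remainder is either the local rigidity theorem already in that section or the routine rational approximation, and the heart of the matter is the elementary but crucial observation that commuting parabolic elements in $PSL_2(\BC)$ must share their fixed point at infinity.
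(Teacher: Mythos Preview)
Your proposal is correct and follows essentially the same approach as the paper: define the $\BQ$-subvariety $\mathcal{R}_U(\gC,G)$ by imposing unipotency on a finite set of cusp generators, observe that on this subvariety the rank-one argument of Section~\ref{rank-1} goes through (so Mostow rigidity yields conjugacy), and then run the standard density argument for $\overline{\BQ}\cap\BR$-points. Your explicit justification that commuting unipotents in $PSL_2(\BC)$ share their fixed point at infinity, and hence that the unipotency constraint on $\gS_u$ propagates to all of each cusp subgroup, spells out a step the paper leaves implicit.
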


In the next section, we shall proceed by induction on $\mbox{rank} (G)$. The rank one case, thus,
would be the base step in this induction argument. We shall need the following weak
version of local rigidity, which holds in the general case (also for $G=SL_2(\BR )$), and is
proved by the same argument as above.

\begin{lem}
Let $G$ be a connected rank one simple Lie group with associated symmetric space $X$, and let 
$\gC\leq G$ be a torsion free lattice.
Let $\rho$ be a small deformation of $\gC$ which takes all the unipotents in $\gC$ to unipotents.
Then $\rho (\gC )$ is a lattice, $\rho (\gC )\backslash X$ is homeomorphic to $\gC\backslash X$,
and $\rho :\gC\to\rho (\gC )$ is an isomorphism.
\end{lem}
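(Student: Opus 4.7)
The plan is to follow the proof of Theorem~\ref{rk-1thm} essentially verbatim, replacing the dimension argument (which forced $n \geq 4$) by direct use of the hypothesis that $\rho$ sends unipotents in $\gC$ to unipotents. If $\gC$ is uniform then it contains no unipotents, the hypothesis is vacuous, and the conclusion is Corollary~\ref{compact rigidity}. So assume $\gC$ is non-uniform. As in Section~\ref{rank-1}, truncate each cusp of $M = \gC \backslash X$ along a horosphere to obtain a compact submanifold $M_0$; the Ehresmann--Thurston principle (Theorem~\ref{prin}) produces a $(G,X)$-structure $M_0'$ on $M_0$ whose holonomy is $\rho$. For each cusp let $\gC_T \leq \gC$ be the stabilizer of the cusp point $p_0 \in X(\infty)$, and $\gC_T^0 = \gC \cap N$ the finite-index unipotent subgroup, which is a cocompact lattice in the unipotent radical $N$ of $P = MAN$. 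The task is to show that $\rho(\gC_T)$ fixes some $p \in X(\infty)$ close to $p_0$ and lies in $M_pN_p$, so that a canonical cusp can be glued back in.

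For the key step, pick any non-trivial central element $z \in Z(\gC_T^0)$; this exists because $\gC_T^0$ is a non-trivial finitely generated nilpotent group. By hypothesis $\rho(z)$ is unipotent, and for $\rho$ close to $\rho_0$ it remains non-trivial, so it has a unique fixed point $p \in X(\infty)$. Every $\gc \in \gC_T^0$ commutes with $z$, hence $\rho(\gc)$ commutes with $\rho(z)$ and preserves its ideal fixed-point set $\{p\}$; thus $\rho(\gC_T^0) \subset P_p$. Since elements of $\rho(\gC_T^0)$ are themselves unipotent and, in $P_p = M_pA_pN_p$, the only unipotents lie in $N_p$ (as $M_p$ is compact and $A_p$ is a split torus, so the semisimple Jordan component of any non-trivial $M_pA_p$-element is non-trivial), we get $\rho(\gC_T^0) \subset N_p$. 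Normality of $\gC_T^0$ in $\gC_T$ then implies $\rho(\gC_T)$ normalizes $\rho(\gC_T^0) \subset N_p$; but every non-trivial element of $N_p$ has $\{p\}$ as its sole ideal fixed point, and conjugation permutes fixed points, so $\rho(\gC_T)$ fixes $p$, i.e.\ $\rho(\gC_T) \subset P_p$. Finally, $\rho(\gC_T)$ contains no hyperbolic: any $\gc \in \gC_T$ has a power in $\gC_T^0$, and $\rho(\gc^k) = \rho(\gc)^k$ is unipotent by hypothesis, while a power of a hyperbolic is hyperbolic. Hence $\rho(\gC_T) \subset M_pN_p$.

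By continuity of the $G$-action on $X(\infty)$ and openness of the orbit map $G \to G/P$, there is $g \in G$ close to the identity with $g \cdot p_0 = p$; conjugating $\rho$ by $g$ turns $\rho^g|_{\gC_T}$ into a small deformation of the inclusion $\gC_T \hookrightarrow MN$ inside the connected Lie group $MN$. Proposition~\ref{GWT} applied to the cocompact lattice $\gC_T \leq MN$ then shows that $\rho^g(\gC_T)$ is again a cocompact lattice in $MN$ and $\rho^g : \gC_T \to \rho^g(\gC_T)$ is an isomorphism. Thus $\rho(\gC_T)$ is the holonomy of a canonical cusp, and gluing this cusp onto $M_0'$ along its boundary horosphere exactly as in the proof of Theorem~\ref{rk-1thm} yields a complete finite-volume $(G,X)$-manifold homeomorphic to $M$ with holonomy $\rho$. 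The only new input beyond Section~\ref{rank-1} is the deduction $\rho(\gC_T^0) \subset N_p$, which is immediate from the unipotent hypothesis together with the rank-one fact that a unipotent has a unique ideal fixed point; this is precisely what replaces the dimension argument of Proposition~\ref{smalldim} and Claim~\ref{bigdim}, and is where the restriction $n \geq 4$ in Theorem~\ref{rk-1thm} is bypassed.
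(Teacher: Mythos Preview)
Your proof is correct and follows essentially the same approach as the paper, which simply states that the lemma ``is proved by the same argument as above.'' You have correctly identified that the hypothesis ``$\rho$ sends unipotents to unipotents'' immediately forces the central element $\rho(z)$ to be parabolic, thereby bypassing the dimension argument of Proposition~\ref{smalldim} and Claim~\ref{bigdim}; the rest of the gluing argument via Proposition~\ref{GWT} is exactly as in the proof of Theorem~\ref{rk-1thm}.
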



\section{The higher rank case}\label{higher-rank}

We shall now consider the case where $G$ is a connected center-free semisimple Lie
group of rank $\geq 2$ without compact factors, with associated symmetric space $X=G/K$,
and $\gC\leq G$ a non-uniform (torsion free) irreducible lattice, corresponding to the locally 
symmetric manifold $M=\gC\backslash X$.

As in the rank one case, we are going to apply Ehresmann-Thurston's principle. 
So we need to work with a compact
submanifold with boundary $M_0\subset M$ which exhausts most of $M$. We want to obtain $M_0$ from
$M$ by cutting out its ends, and to have enough information on the structure of the ends, so that
we can argue as we did in the rank one case. Of course, it would be nicer to give a simple proof
of local rigidity for general lattices, without assuming arithmeticity. 
However, we shall use the description of Leuzinger \cite{Leuzinger}
for the structure of $M\setminus M_0$, and hence assume, as in \cite{Leuzinger}, that
$\gC$ is arithmetic. 
We shall prove the following result, which originally is a consequence 
of Margulis super-rigidity theorem.

\begin{thm}
Let $\gC\leq G$ be a non-uniform arithmetic irreducible lattice, and let $\rho$ be a small 
deformation of the inclusion $\rho_0:\gC\to G$. Then $\rho (\Gamma )$ is a lattice, 
$\rho(\gC )\backslash X$ is homeomorphic to $M$, and $\rho :\gC\to\rho (\gC )$ is an isomorphism.
\end{thm}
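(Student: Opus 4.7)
The plan is to run essentially the same strategy as in the rank-one case, but with the cusps of $M$ replaced by the more complicated ends described by Leuzinger, and with an induction on $\rk(G)$ that uses the rank-one result (together with its weak variant including $\gC$ in $PSL_2(\BR)$ and $PSL_2(\BC)$) as the base case. First I would invoke Leuzinger's reduction theory for the arithmetic manifold $M=\gC\backslash X$: there is a compact submanifold with boundary $M_0\subset M$ exhausting most of $M$, such that $M\setminus M_0$ is a disjoint union of ends $E_P$ indexed by $\gC$-conjugacy classes of $\BQ$-parabolic subgroups $P\leq\BG$. For each such $P$ with Langlands decomposition $P=M_PA_PN_P$, the end $E_P$ is diffeomorphic to a fibration whose fiber is a compact nilmanifold $(\gC\cap N_P)\backslash N_P$ and whose base is a lower-rank arithmetic locally symmetric manifold coming from (a finite-index subgroup of) the projection of $\gC\cap P$ to $M_P$; its fundamental group $\gC_P=\gC\cap P$ sits in the short exact sequence
\[
1\to\gC\cap N_P\to\gC_P\to\gC_{M_P}\to 1,
\]
where $\gC_{M_P}$ is an arithmetic lattice in $M_P$ of strictly smaller rank. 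Applying Theorem~\ref{prin} to a slight thickening $N_{Th}$ of $M_0$ produces a $(G,X)$-structure on the interior of $M_0$ whose holonomy agrees with $\rho$.

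The heart of the argument is to show that $\rho$ can then be extended across each end. For this I would argue, in parallel to the rank-one proposition, that after a small inner conjugation, $\rho(\gC_P)\subset P$, that $\rho(\gC\cap N_P)\subset N_P$, and that the induced deformation of $\gC_{M_P}$ in $M_P$ is trivial up to conjugation. The proof proceeds by induction on $\rk(G)$. The central unipotent elements of $\gC\cap N_P$ have to be sent to parabolic elements by exactly the central-element / dimension-counting argument of Claim~\ref{bigdim} and Proposition~\ref{smalldim}, applied iteratively inside the sub-symmetric spaces of fixed points; this already forces $\rho(\gC\cap N_P)$ to lie in a single parabolic subgroup, which, by centrality and the commutation relations in $P$, must be the one obtained as a small perturbation of $P$. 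Conjugating by a small $g\in G$ that moves the fixed point at infinity back to that of $\gC_P$, we may assume $\rho(\gC\cap N_P)\subset N_P$; the weak form of local rigidity for the lower-rank semisimple group $M_P$ (the inductive hypothesis, which in its weak form, as stated just before \S\ref{higher-rank}, is insensitive to $SL_2(\BR)$ and $SL_2(\BC)$ factors because we can demand unipotents go to unipotents) then shows that $\rho$ is trivial on $\gC_{M_P}$ modulo inner conjugation by an element of $M_P$ close to~$1$. This is the step I expect to be most delicate: one has to check that the unipotent generators of $\gC\cap N_P$ are mapped to unipotents in $N_P$ (so that the inductive hypothesis can actually be fed with the right hypothesis on its cusps), which requires the same Zariski-density argument used in the rank-one proposition applied now to the normal subgroup $\gC\cap N_P$ inside $\gC_P$ and then propagated through the exact sequence.

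Once this is established, I would construct, for each end, a canonical model cusp $C_P^\rho$: namely the quotient of a suitable horoball-like region in $X$ associated with $P$ by the group $\rho(\gC_P)$, which, after the previous step, acts cocompactly and properly on the boundary horosphere-like hypersurface and preserves the horoballs of a Siegel set associated to $P$. The Haar-measure computation with the Iwasawa decomposition $KAN$, together with the cocompactness of $\rho(\gC_P)$ on horospheres (a consequence of the induction), shows $C_P^\rho$ has finite volume, just as in the rank-one case. The $(G,X)$-structure on $M_0$ extends over the collar neighborhood of each boundary component of $M_0$ to match the structure on $C_P^\rho$ (again by Theorem~\ref{prin} applied in the cusp), so one can glue $C_P^\rho$ to $M_0$ along its boundary to produce a complete, finite-volume $(G,X)$-manifold $M'$ homeomorphic to $M$ whose holonomy is $\rho$.

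Finally, by Theorem~\ref{complet}, the developing map of $M'$ identifies its universal cover with $X$ equivariantly; hence $\rho(\gC)$ acts properly discontinuously on $X$ with quotient $M'$, so $\rho(\gC)$ is a lattice and $\rho:\gC\to\rho(\gC)$ is an isomorphism, and $\rho(\gC)\backslash X=M'$ is homeomorphic to $M$. The main obstruction in carrying this out, as indicated, is organizing the inductive step cleanly: one needs the inductive hypothesis in a form strong enough to apply to the possibly reducible semisimple group $M_P$ (which may have $SL_2(\BR)$ or $SL_2(\BC)$ factors), and this is exactly the reason for having formulated the weak version of Theorem~\ref{rk-1thm} at the end of Section~\ref{rank-1}.
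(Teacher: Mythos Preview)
Your overall strategy matches the paper's: Leuzinger's exhaustion, Ehresmann--Thurston on $M_0$, control of each cuspidal piece via an induction on $\rk(G)$, and a gluing. But there is a genuine gap at the step you flag as ``most delicate,'' and it is more serious than you indicate.

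You propose to show that central unipotents of $\gL=\gC\cap N_P$ are sent to unipotents ``by exactly the central-element / dimension-counting argument of Claim~\ref{bigdim} and Proposition~\ref{smalldim}.'' That argument is specific to rank one: there the centralizer of a hyperbolic element embeds in $\BR\times SO_{n-1}(\BR)$, whose connected abelian subgroups have dimension $\leq [\frac{n-1}{2}]+1<n-1$, and this is what rules out a hyperbolic image. In higher rank the centralizer of a semisimple element is a Levi subgroup, which typically contains unipotent subgroups of large dimension; there is no inequality of the form ``non-unipotent connected nilpotent subgroups have dimension $<\dim N_P$'' to appeal to. So the rank-one mechanism does not transplant, and a Zariski-density argument alone does not force $\rho(z)$ to be unipotent. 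The paper handles this step by an entirely different device: it invokes the Lubotzky--Mozes--Raghunathan theorem to show that unipotents in an irreducible higher-rank lattice are $U$-elements (logarithmic word growth), hence their images under any homomorphism have all eigenvalues roots of unity, and then a connectedness argument on $\mathcal{R}(\gC,G)$ pins the eigenvalues at $1$. Without this (or some substitute), your inductive feed-in ``unipotents go to unipotents'' is not established, and the weak rank-one lemma cannot be applied to $M_P$.

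A secondary inaccuracy: for $\BQ$-rank $\geq 2$ the end of $M$ is \emph{not} a disjoint union of pieces $E_P$ indexed by conjugacy classes of $\BQ$-parabolics; it is connected, and its fundamental group is a finite complex of groups whose simplices correspond to (conjugacy classes of) $\BQ$-parabolics with reversed inclusion. Leuzinger's description gives $M_0=V(s)$ as a compact manifold with corners, and the collar $V(s)\setminus V(s_0)$ carries this complex-of-groups structure. The gluing step therefore is not ``glue one canonical cusp per $P$'' but rather: show that $\rho$ takes each simplex group $\gC\cap M_PN_P$ isomorphically into $M_{P'}N_{P'}$ for a parabolic $P'$ of the same type, as a lattice there, and then complete the developed image of the collar by the corresponding family of horoballs. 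Your outline would go through once adjusted to this picture, but only after the unipotent-preservation step is secured as above.
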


The case of $\BQ$-$\mbox{rank}(\gC )=1$ is very similar to
the $\BR$-$\mbox{rank}$ one case. However, the picture is much more sophisticated
when $\BQ$-$\mbox{rank}(\gC )\geq 2$.
Then $M$ has only one end, i.e. if $M_0$ is defined appropriately then
$M\setminus M_0$ is connected. Moreover, ``from infinity'' $M$ ``looks like'' a simplicial complex
of dimension $\BQ$-$\mbox{rank}(\gC )$. Our argument will use an induction on the $\BR$-$\mbox{rank}$.

\medskip

Let $B$ be a parabolic subgroup of $G$ corresponding to a cusp of $\Gamma$, 
and $B=MAN$ be its Langland's decomposition over $\BQ$.
Let $\Gamma_0 := \Gamma \cap B$, $\Lambda = \Gamma \cap N$, and $C$ be the 
center of $\Lambda$. By arithmeticity, $\gL$ is a lattice in the unipotent group $N$, and 
in particular it is finitely generated.

\medskip

As in the rank one case, a crucial point in the proof is the following:

\begin{prop}\label{unipotent}
The group $\rho (\Lambda )$ is unipotent.
\end{prop}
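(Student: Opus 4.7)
The plan is to argue by induction on the real rank of $G$, with the base case supplied by Theorem \ref{rk-1thm} (together with its weak companion in section \ref{rank-1} for $PSL_2(\BR)$ and $PSL_2(\BC)$). The inductive step exploits the Langlands decomposition $B = MAN$: since $M$ has strictly smaller real rank than $G$, and since the projection of $\Gamma_0$ modulo $\Lambda$ is (up to finite index) an arithmetic lattice $\Gamma_M$ in $M$, the inductive hypothesis applied to the restriction of $\rho$ to a lift of $\Gamma_M$ gives local rigidity for $\Gamma_M$. In particular, after conjugating $\rho$ by a small element of $G$, we may assume that $\rho|_{\Gamma_M}$ is close to the original inclusion $\Gamma_M \hookrightarrow M$, not merely close to something conjugate to it.

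Having arranged this strong form of closeness, the core step is to contract elements of $\Lambda$ by arithmetic conjugation. I pick $m \in \Gamma \cap MA$ whose image in $A$ is a dominant regular element: then $\mathrm{Ad}(m)$ acts on the Lie algebra $\mathfrak{n}$ with real eigenvalues all strictly less than one, so for any $\lambda \in \Lambda$ the sequence $m^n \lambda m^{-n}$ (still in $\Lambda$ by normality of $N$) converges to the identity in $G$ at exponential speed. Applying the homomorphism $\rho$ gives $\rho(m)^n \rho(\lambda) \rho(m)^{-n}$; combining the strong inductive control over $\rho(m)$ (which is now close to $m$ itself) with the fact that $\rho(\lambda)$ is close to $\lambda$, one checks that this sequence also tends to the identity in $G$. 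Since the Jordan decomposition depends continuously on its argument near the identity, the semisimple parts $\rho(m)^n s_\lambda \rho(m)^{-n}$ of the Jordan decomposition $\rho(\lambda) = s_\lambda u_\lambda$ also tend to the identity. But each is conjugate to $s_\lambda$ and so has the same eigenvalues, which must therefore all equal one; as $s_\lambda$ is semisimple, this forces $s_\lambda = 1$, so $\rho(\lambda)$ is unipotent. Running this argument first on the center $C$ of $\Lambda$ and then on the filtration by central series handles non-central $\lambda$ by a secondary induction on nilpotency class.

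The main obstacle is rigorously justifying the convergence $\rho(m)^n \rho(\lambda) \rho(m)^{-n} \to e$: small errors in $\rho(m)$ relative to $m$ are in principle amplified exponentially under iterated conjugation, so that $\rho(m)$ being close to $m$ does not automatically give what we need. The resolution leans on the strong form of the inductive hypothesis --- essentially that after a universal conjugation, $\rho|_{\Gamma_M}$ and $\rho_0|_{\Gamma_M}$ agree in $M$ up to a perturbation confined to directions where it does not get amplified by iterating $\mathrm{Ad}(m)$ --- and on a careful balancing of the exponential contraction rate against any residual amplification, using the polynomial word-length of $m^n\lambda m^{-n}$ in $\Lambda$. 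A secondary difficulty is the case where $\BQ$-rank of $\Gamma$ equals one, in which the group $\Gamma \cap MA$ may lack sufficiently rich contracting elements; here one reverts to an argument in the style of section \ref{rank-1}, operating directly on the center of $\Lambda$ via the trichotomy of isometry types applied to $\rho(c)$ for central $c$.
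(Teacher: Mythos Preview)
Your approach has a structural gap at the very first step: there is no element $m \in \Gamma \cap MA$ with nontrivial $A$-component. Since $B$ is a $\BQ$-parabolic and $A$ is its $\BQ$-split torus, the arithmetic group $\Gamma \cap B$ meets $A$ only in a finite set (arithmetic subgroups of $\BQ$-split tori are finite); in fact $\Gamma_0 = \Gamma \cap B$ lies entirely in $MN$, as the paper itself uses later. For instance, in $SL_n(\BZ)$ with $B$ the upper-triangular Borel, $\Gamma \cap MA$ consists only of diagonal matrices with entries $\pm 1$. So the contracting dynamics $m^n \lambda m^{-n} \to e$ you want to exploit is simply unavailable inside $\Gamma$: any genuinely contracting element of $B$ lies outside the lattice, and any lattice element outside $B$ fails to normalize $\Lambda$. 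This is not a $\BQ$-rank-one pathology --- it holds in every $\BQ$-rank.

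There is also a circularity in your induction. To view $\rho$ as inducing a small deformation of $\Gamma_M$ \emph{inside $M$}, you would first need to know that $\rho(\Gamma_0)$ lands in a parabolic close to $B$ and then project to its Levi; but in the paper's logic that containment is deduced \emph{from} the unipotence of $\rho(\Lambda)$ (via the fixed-point-at-infinity argument), which is precisely what you are trying to prove. The paper's actual route avoids both issues by invoking the Lubotzky--Mozes--Raghunathan theorem (Theorem~\ref{L-M-R}): every unipotent $\gamma \in \Gamma$ satisfies $|\gamma^n| \asymp \log n$ in the word metric, a purely group-theoretic property that transfers to $\rho(\gamma)$ under any homomorphism; this forces all eigenvalues of $\rho(\gamma)$ to be roots of unity (Lemma~\ref{22}), and a connectedness argument then pins them at $1$. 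A short Zariski-closure argument upgrades ``each generator maps to a unipotent'' to ``$\rho(\Lambda)$ is a unipotent group''.
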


In order to prove \ref{unipotent} we shall use the notion of $U$-elements 
from \cite{Gromov} and \cite{LMR}.

\begin{defn}
Let $\gD$ be a finitely generated group. An element $\gc\in\gD$ is called a 
{\it $U$-element} if the length of $\gc^n$, with respect to the word metric on 
$\gD$ corresponding to a fixed finite generating set, grows like $\log (n)$.
\end{defn}

\begin{rem}
The notion of $U$-elements is well defined and independent of the choice of 
the finite generating set.
\end{rem}

\begin{lem}\label{22}
If $\gD$ is a linear group and $\gc\in\gD$ is a $U$-element, then all the 
eigenvalues of $\gc$ are roots of unity.
\end{lem}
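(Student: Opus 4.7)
The plan is to combine an elementary word-length/operator-norm estimate with Kronecker's theorem on algebraic integers. Embed $\gD \leq GL_n(\BC)$ and fix a finite generating set $\gS$ of $\gD$. For any operator norm on $n \times n$ matrices, set $M = \max_{g \in \gS \cup \gS^{-1}} \|g\|$; then every $\delta \in \gD$ of word length $\ell$ satisfies $\|\delta\| \leq M^\ell$. Since $\gc$ is a $U$-element, $|\gc^n|_\gS = O(\log n)$, and therefore
\[
\|\gc^n\| \leq M^{O(\log n)} = O(n^c) \quad\text{for some constant } c > 0.
\]

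For any eigenvalue $\lambda$ of $\gc$, the bound $|\lambda|^n = |\lambda^n| \leq \|\gc^n\|$ forces $|\lambda| \leq 1$. Applying the same argument to $\gc^{-1}$, which is obviously also a $U$-element, yields $|\lambda^{-1}| \leq 1$, whence $|\lambda| = 1$.

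To upgrade from ``absolute value one'' to ``root of unity,'' I would use the finite generation of $\gD$ as a linear group. All matrix entries of elements of $\gD$ lie in a finitely generated subring $R \subset \BC$, so the characteristic polynomial $p(t)$ of $\gc$ has coefficients in $R$. The $U$-element property is purely group-theoretic, so for any ring embedding $\sigma : R \hookrightarrow \BC$, the image $\sigma(\gc) \in \sigma(\gD)$ is again a $U$-element, and the argument above shows that every root of $\sigma(p)(t)$ has absolute value one. Specializing $R$ into the ring of integers of a number field over which $p$ splits, the eigenvalues of $\gc$ become algebraic integers all of whose Galois conjugates lie on the unit circle; Kronecker's theorem then identifies them as roots of unity.

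The main obstacle is this last specialization step: one must carefully arrange a ring homomorphism from $R$ into the integers of a suitable number field, lift it to the eigenvalues of $\gc$, and check that the resulting eigenvalues retain the property that all complex embeddings land on the unit circle. The essentially archimedean part of the argument is immediate from word-length bounds, but the passage from ``unit circle in every complex embedding'' to the genuinely arithmetic conclusion via Kronecker is where the linearity hypothesis has to be used nontrivially.
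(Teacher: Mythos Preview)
Your norm estimate in the first two paragraphs is correct and is, in contrapositive form, exactly the computation the paper carries out. The problem is entirely in the last step.

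Specializing $R$ by a homomorphism $\phi:R\to\mathcal{O}_K$ and applying Kronecker tells you that the eigenvalues of $\phi(\gc)$ --- the roots of $\phi(p)(t)$ --- are roots of unity. It says nothing about the roots of $p(t)$ itself: $\phi$ is typically not injective, and there is no mechanism in your argument to transport the conclusion back to the original $\lambda$. (There is also a secondary issue: such a $\phi$ need not exist at all, e.g.\ if $1/2\in R$.) More fundamentally, the archimedean information you have extracted is simply too weak to finish. The element $(3+4i)/5=(2+i)/(2-i)\in\BZ[i,1/5]$ lies on the unit circle under both complex embeddings of $\BZ[i,1/5]$ yet is not a root of unity; so the statement ``$|\sigma(\lambda)|=1$ for every ring homomorphism $\sigma$ into $\BC$'' cannot by itself force $\lambda$ to be a root of unity, no matter how the specialization is arranged.

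The paper closes this gap by allowing the target to be an arbitrary local field, not just $\BC$. It invokes the Tits/Kronecker specialization lemma: if $\gc$ has an eigenvalue that is not a root of unity, then there is a local field $k$ (possibly non-archimedean) and a representation $r:\gD\to GL_m(k)$ in which $r(\gc)$ has an eigenvalue of absolute value strictly greater than $1$. Your own sub-multiplicativity estimate, run inside $GL_m(k)$, then gives exponential norm growth and hence linear word growth of $\gc^n$, contradicting the $U$-property. In the example above it is the primes of $\BZ[i]$ lying over $5$ that detect $(3+4i)/5$; these non-archimedean places are exactly the ingredient your purely archimedean argument is missing.
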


\begin{proof}
If $\gc\in\gD$ has an eigenvalue which is not a root of unity, then, as shown in the proof of Tits'
alternative \cite{Tits} (or more simply using Kronecker's lemma), 
there is a local field $k$ and a representation $r:\gD\to GL_m(k)$, such that 
$r(\gc )$ has an eigenvalue with absolute value $>1$. This implies that the norm of $r(\gc )^n$
in $GL_m(k)$ grows exponentially. If $\gS$ is a finite generating set, then the norms of the 
elements in $r(\gS)$ are uniformly bounded. As the norm is a sub-multiplicative function, this 
implies that the length of $\gc^n$ in the word metric grows linearly with $n$.
\end{proof}

\begin{thm}[Lubotzky-Mozes-Raghunathan \cite{LMR}]\label{L-M-R}
If $\gD\leq G$ is an irreducible lattice in a connected higher rank 
semisimple Lie group $G$, then any unipotent element
$\gc\in\gD$ is a $U$-element.
\end{thm}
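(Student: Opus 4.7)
The plan is to prove that unipotent elements in higher-rank irreducible lattices have logarithmic word-length growth. My first reduction would be to invoke Margulis' arithmeticity theorem: since $G$ has higher real rank and $\gD$ is irreducible, $\gD$ is arithmetic, so (after passing to a commensurable group) we may realize $\gD$ inside $\BG(\BZ)$ for some semisimple $\BQ$-algebraic group $\BG$. Any unipotent $\gc \in \gD$ embeds in a one-parameter unipotent $\BR$-subgroup $U = \{u(t) : t \in \BR\}$ with $\gc = u(1)$, and hence $\gc^n = u(n)$.

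The core idea is a contraction-and-conjugation argument. By Jacobson--Morozov, there is a semisimple element $H \in \mathfrak{g}$ inside an $\mathfrak{sl}_2$-triple containing $\log(\gc)$, so that $\exp(tH)\, u(s)\, \exp(-tH) = u(e^{-2t}s)$. Setting $t_n = \tfrac{1}{2}\log n$ one has $\exp(-t_n H)\, \gc^n\, \exp(t_n H) = \gc$, so if we could find an element $a_n \in \gD$ that approximates $\exp(t_n H)$ well enough and whose word length in $\gD$ is $O(\log n)$, then $\gc^n$ would be expressible as $a_n\, \gc\, a_n^{-1}$ times an error of bounded length, giving $|\gc^n|_\gD = O(\log n)$.

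The heart of the argument, and the main obstacle, is manufacturing $a_n$ from short words in $\gD$. This is exactly where higher rank enters: the Cartan subalgebra containing $H$ has dimension $\geq 2$, so $H$ lies in the span of brackets of root vectors attached to additional positive roots. Using further unipotent elements $v_j \in \gD$ in other root directions (which exist abundantly in an arithmetic lattice), one can build, via commutator identities of Chevalley/Steinberg type and Baker--Campbell--Hausdorff estimates, a short word in the $v_j$'s and $\gc$ that approximates $\exp(t_n H)$ to the required precision. The book-keeping is delicate and must be carried out case by case on the restricted root system of $G$; the rank-one case genuinely fails, and rank-two root systems (types $A_2$, $B_2$, $G_2$) provide the base cases from which a general reduction can be organized.

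I would expect the hardest step to be the explicit construction of the approximating $a_n$ with uniform logarithmic bounds. A natural strategy is to pass to a suitable rank-two semisimple subgroup $H \leq G$ defined over $\BQ$ that contains $U$ and at least one other unipotent direction, reducing to arithmetic lattices in split rank-two groups where one can exploit the Bruhat decomposition and explicit commutator relations. The passage back to $\gD$ then uses that intersections with arithmetic subgroups remain arithmetic, so the rank-two construction yields lattice elements. This is the strategy carried out in \cite{LMR}, and I would follow it rather than attempt a self-contained argument here.
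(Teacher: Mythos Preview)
The paper does not give a proof of this theorem at all: it is quoted as a black box from \cite{LMR} and used only through Corollary~\ref{U is unip}. Your proposal is a sketch of the strategy actually carried out in \cite{LMR} (arithmeticity, contracting a one-parameter unipotent by a semisimple element, and manufacturing the contracting element as a short word using auxiliary root subgroups available in higher rank), and you yourself conclude by deferring to \cite{LMR}; so in effect you and the paper do the same thing, with the only difference that you supply an informative outline of the cited argument.
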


Returning to our case, we conclude that:

\begin{cor}\label{U is unip}
If $\gc\in\gC$ is unipotent, and $\rho$ is sufficiently small, then $\rho (\gc )$ is unipotent.
\end{cor}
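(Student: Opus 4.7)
The plan is to combine Theorem \ref{L-M-R} with Lemma \ref{22} and conclude by a continuity argument. Fix a finite generating set $\gS$ for $\gC$. Since $\gc\in\gC$ is unipotent and $\gC$ is an irreducible higher-rank lattice, Theorem \ref{L-M-R} gives $|\gc^n|_{\gS}=O(\log n)$, i.e.\ $\gc$ is a $U$-element. Because $\rho:\gC\to G$ is a homomorphism, any $\gS$-word of length $\ell$ representing $\gc^n$ maps to a word of the same length in $\rho(\gS)$ representing $\rho(\gc^n)=\rho(\gc)^n$; hence $\rho(\gc)$ is itself a $U$-element in the linear group $\rho(\gC)\leq G\leq GL(V)$ (for $V$ e.g.\ the adjoint representation). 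Lemma \ref{22} then yields that every eigenvalue of $\rho(\gc)$ is a root of unity, so $\rho(\gc)$ is at least quasi-unipotent.

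To upgrade quasi-unipotence to unipotence when $\rho$ is close to $\rho_0$, consider the Jordan decomposition $\rho(\gc)=su$ with $s$ semisimple of finite order and $u$ unipotent commuting with $s$; I want to show $s=1$ in a neighborhood of $\rho_0$. The map $\rho\mapsto\rho(\gc)$ is continuous in the topology of pointwise convergence, so as $\rho\to\rho_0$ the characteristic polynomial of $\mathrm{Ad}(\rho(\gc))$ converges to $(x-1)^{\dim G}$, and consequently every eigenvalue of $\rho(\gc)$ converges to $1$. Combining this with the constraint that all eigenvalues are roots of unity lying in a bounded-degree algebraic family (at most $\dim G$ eigenvalues, all roots of one real polynomial of degree $\dim G$), a Kronecker-style rigidity argument forces each of them to coincide with $1$ once $\rho$ is close enough to $\rho_0$. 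Hence $s=1$ and $\rho(\gc)$ is unipotent.

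The main obstacle is precisely this final continuity step. A priori a root of unity such as $e^{2\pi i/N}$ can lie arbitrarily close to $1$ without equaling it, so the mere fact that the eigenvalues are roots of unity converging to $1$ is not itself sufficient. One has to genuinely exploit the bounded complexity of the set of admissible eigenvalues, coming from the bounded degree of the characteristic polynomial and the real-coefficient constraint, in order to confine them to a discrete family on which continuity does imply local constancy, thereby killing the semisimple Jordan part of $\rho(\gc)$.
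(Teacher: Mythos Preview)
Your first paragraph is correct and matches the paper exactly: Theorem~\ref{L-M-R} gives that $\gc$ is a $U$-element, this passes to $\rho(\gc)$ under the homomorphism, and Lemma~\ref{22} forces all eigenvalues to be roots of unity.

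The gap is in your continuity step, and you have in fact identified it yourself without closing it. Your proposed resolution --- exploiting ``bounded complexity'' via the degree of the characteristic polynomial and the real-coefficient constraint --- does not work. For any $N$, the pair $e^{\pm 2\pi i/N}$ are roots of the real quadratic $x^2 - 2\cos(2\pi/N)x + 1$, so the set of roots of unity arising as eigenvalues of real matrices of a fixed size is \emph{all} roots of unity. There is no discrete family here that a bounded-degree argument isolates, and no Kronecker-type lemma applies, since the characteristic polynomial is not integral but varies with $\rho$.

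The paper closes this gap differently. Since $\mathcal{R}(\gC,G)$ is locally arcwise connected, a sufficiently small $\rho$ is joined to $\rho_0$ by a path $\rho_t$. For every $t$ the same argument (Theorem~\ref{L-M-R} plus Lemma~\ref{22}) shows that the eigenvalues of $\rho_t(\gc)$ are roots of unity. Thus each eigenvalue, as a function of $t\in[0,1]$, is a continuous map from a connected interval into the totally disconnected set of roots of unity, hence constant; since at $t=0$ the value is $1$, it is $1$ for all $t$. This replaces your attempted discreteness bound by a connectedness argument, which is what actually makes the step go through.
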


\begin{proof}
We know that $\gc$ is a $U$-element in $\gC$ by Theorem \ref{L-M-R}, and hence $\rho (\gc )$ is a 
$U$-element in $\rho (\gC )$. By lemma \ref{22} all the eigenvalues of $\rho (\gc )$ are roots
of unity. 

Recall that $\mathcal{R}(\gC ,G)$ is locally arcwise
connected, i.e. if $\rho$ is close enough to $\rho_0$ then they are connected by an
arc of deformations $\rho_t$. Since the eigenvalues depend continuously on the matrix, and since
the set of roots of unity is totally disconnected, we conclude that for any $t$, all the 
eigenvalues of $\rho_t(\gc )$ must be $1$. Hence $\rho_t(\gc )$, and in particular $\rho (\gc )$ 
is unipotent.
\end{proof}

\begin{cor}\label{4.7}
Given a finite generating set $\gS$ for $\gL$, there is a neighborhood 
$\gO$ of the inclusion $\rho_0:\gC\to G$, such that if $\rho\in\gO$ and 
$\gc\in\gS$, then $\rho (\gc )$ is unipotent.
\end{cor}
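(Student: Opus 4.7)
The plan is to deduce this statement directly from Corollary \ref{U is unip} by a finite intersection argument. First I would observe that every element of $\Lambda = \gC \cap N$ is unipotent, because $N$ is the unipotent radical of the $\BQ$-parabolic $B=MAN$; in particular the chosen generating set $\gS \subset \Lambda$ consists of unipotent elements of $\gC$. Thus each $\gc \in \gS$ is eligible for Corollary \ref{U is unip}.

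Next, for each individual $\gc \in \gS$, Corollary \ref{U is unip} furnishes a neighborhood $\gO_\gc$ of the inclusion $\rho_0$ in $\mathcal{R}(\gC , G)$ such that $\rho(\gc )$ is unipotent for every $\rho \in \gO_\gc$. Since $\gS$ is finite by hypothesis (and recall $\gL$ is indeed finitely generated, being a lattice in the nilpotent group $N$), the intersection
\[
\gO := \bigcap_{\gc \in \gS} \gO_\gc
\]
is still an open neighborhood of $\rho_0$, and on $\gO$ every $\rho(\gc )$ with $\gc \in \gS$ is simultaneously unipotent. This is precisely the neighborhood required by the statement.

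There is essentially no obstacle: the only ingredients are that a finite intersection of open sets remains open and that the elements of $\gS$ lie in $N$. The genuine content has already been absorbed into Corollary \ref{U is unip}, whose proof rested on the Lubotzky--Mozes--Raghunathan theorem (Theorem \ref{L-M-R}) that unipotents in higher-rank irreducible lattices are $U$-elements, on Lemma \ref{22} forcing their eigenvalues to be roots of unity, and on the local arcwise connectedness of $\mathcal{R}(\gC , G)$ which upgrades ``all eigenvalues are roots of unity'' to ``all eigenvalues equal $1$''.
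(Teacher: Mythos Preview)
Your proposal is correct and matches the paper's approach exactly: the paper states Corollary \ref{4.7} without proof, treating it as an immediate consequence of Corollary \ref{U is unip}, and your finite-intersection argument is precisely the obvious way to make that implication explicit.
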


\begin{proof}[Proof of Proposition \ref{unipotent}]
Fix a generating set $\gS$ for $\gL$ and assume that $\rho\in\gO$ as in 
Corollary \ref{4.7}.
As $\rho (\gL )$ is nilpotent, its Zariski closure is nilpotent, and hence, 
by Lie's theorem, 
$A=\big(\overline{\rho (\gL )}^Z\big)^0$ is triangulable over $\BC$. 

Assume that $\overline{\rho (\gL )}^Z$ has $i$ connected components. Let
$\rho (\gL )^0=\langle \rho (\gc)^{i!} : \gc\in\gS\rangle$. 
Being a subgroup of $A$, $\rho (\gL )^0$ is triangulable over 
$\BC$. Since $\rho (\gS )$ is made of unipotents, $\rho (\gL )^0$ is
generated by unipotents, and hence it is a unipotent group. 
It follows that $\overline{\rho (\gL )^0}^Z$ is a unipotent algebraic group, and as such it is 
also connected.

Let $\gc$ be an element of $\gS$. Since $\rho (\gc )$ is unipotent 
$\overline{\langle\rho (\gc )\rangle}^Z=\overline{\langle\rho (\gc )^{i!} \rangle}^Z$.
This implies that $\rho (\gc )$ is actually contained in $\overline{\rho (\gL )^0}^Z$, 
and the proposition follows.
\end{proof} 

\begin{rem}
It might be possible to avoid the notion of $U$-elements and the use of Theorem \ref{L-M-R}, 
and to find a more elementary proof for Proposition \ref{unipotent}. 
This is not so hard in the special case where $G=SL_3(\BR )$ and $\gC\leq SL_3(\BR )$ 
is any lattice. Moreover, for the special case of
$G=SL_n(\BR )$ and $\gC =SL_n(\BZ )$ Theorem \ref{L-M-R} also has an elementary proof 
(see \cite{LMR2}).   
\end{rem}

\begin{clm}
The group $\rho (\gC_0 )$ fixes a point at infinity (i.e. in $X(\infty )$).
\end{clm}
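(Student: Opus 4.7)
The plan is to deduce the claim from Proposition \ref{unipotent} via a normalizer argument combined with the Borel--Tits theorem. First I would observe that $\gL = \gC \cap N$ is normal in $\gC_0 = \gC \cap B$, because $N$ is the unipotent radical of $B = MAN$ and hence normal in $B$; so $\rho (\gC_0)$ normalizes $\rho (\gL)$ and, a fortiori, normalizes the Zariski closure $H := \overline{\rho (\gL)}^Z$ of $\rho (\gL)$ in $G$ (regarded as the $\BR$-points of a connected semisimple algebraic group).

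Next, by Proposition \ref{unipotent}, $\rho (\gL)$ is unipotent, so $H$ is a unipotent algebraic subgroup of $G$. The group $H$ is moreover nontrivial for $\rho$ sufficiently close to $\rho_0$: the lattice $\gL$ is infinite in the nontrivial unipotent Lie group $N$, and continuity forces $\rho$ to be injective on any prescribed finite generating set of $\gL$ for $\rho$ near $\rho_0$.

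The key step will then be to invoke the Borel--Tits theorem: any unipotent subgroup $H$ of a connected semisimple algebraic group $G$ is contained in the unipotent radical of some parabolic subgroup $P \leq G$, and moreover $N_G(H) \subseteq P$. Since $G$ is semisimple it admits no nontrivial normal unipotent subgroup, so $P$ must be a proper parabolic. We then conclude $\rho (\gC_0) \subseteq N_G(H) \subseteq P$, and any proper parabolic subgroup of $G$ stabilizes a face of the spherical building at infinity and thus fixes every point in that face, yielding the desired fixed point in $X(\infty)$. Modulo Proposition \ref{unipotent}, which has just been established, the appeal to Borel--Tits is the only non-routine ingredient; the normality of $\gL$ in $\gC_0$ and the nontriviality of $H$ are both essentially immediate.
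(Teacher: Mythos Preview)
Your proof is correct, but it takes a different route from the paper's. Both arguments start from the same two facts: Proposition \ref{unipotent} (so $\rho(\gL)$ is unipotent) and the normality of $\gL$ in $\gC_0$ (so $\rho(\gC_0)$ normalizes $\rho(\gL)$ and its Zariski closure). From there the paper argues geometrically: it invokes \cite{BGS}, appendix 3, to see that the common fixed set $\mathcal{B}\subset X(\infty)$ of the unipotent group $\rho(\gL)$ is nonempty, passes to the subset $\mathcal{B}_0$ of Tits-diameter $\leq \pi/2$, and takes its unique Chebyshev center $O$; since $\rho(\gC_0)$ normalizes $\rho(\gL)$ it must fix $O$. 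You instead argue algebraically via the Borel--Tits theorem, which places the unipotent Zariski closure $H=\overline{\rho(\gL)}^Z$ inside the unipotent radical of a parabolic $P$ with $N_G(H)\subseteq P$, and then $\rho(\gC_0)\subseteq N_G(H)\subseteq P$ fixes a point at infinity. Your approach is arguably more efficient for what follows: the paper's very next step is to observe that one can choose $P$ with $\rho(\gL)\subset N_P$ and $N_P=\overline{\rho(\gL)}^Z$, which is precisely the content of Borel--Tits and comes for free in your argument. The paper's approach, on the other hand, stays closer to the geometric spirit of the note and avoids invoking a structural theorem from the theory of algebraic groups.
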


\begin{proof}
According to \cite{BGS}, appendix 3, since $\rho (\gL )$ is unipotent, the set 
of common fixed points at $X(\infty )$
$$
{\mathcal B}= 
\cap_{\gc\in\gL}
\mbox{Fix} \big(\rho (\gamma )\big) 
$$
is non-empty, and so is the set 
$$
{\mathcal B}_0=
\{ z\in {\mathcal B}:Td(z,y)\leq \frac{\pi}{2},~\mbox{for any}~y\in{\mathcal B}\},
$$
(here $Td$ denotes the Tits distance on $X(\infty )$)
which clearly has diameter $\leq\pi /2$ and thus has a unique Chebyshev center $O$.
Since $\gC_0$ normalizes $\gL$, the point $O$ 
must be invariant under the action of $\rho (\gC_0 )$.
Therefore $\rho (\gC_0 )$ is contained in the parabolic subgroup which corresponds to $O$.
\end{proof}

\medskip

Let $P$ be a parabolic subgroup which contains $\rho (\gC_0)$, and let
$P= M_P A_P N_P$ be a Langland's decomposition of $P$. 
Note that the group $N_P$ is well defined, the group $M_P$ is 
determined up to conjugation by elements of $N_P$, and the group $M_P N_P$ is 
well defined. The group $M_P N_P$ can be characterized as the set of all elements in $P$
which acts unimodularly on the Lie algebra $\mbox{Lie}(N_P)$ through the adjoint action, or as
the set of all elements in $P$ which preserve the Busemann functions and their level sets - 
the horospheres around the corresponding fixed points at $X(\infty )$.
In fact we can choose $P$ so that
\begin{enumerate}
\item $\rho ( \Lambda ) \subset N_P$, and
\item $N_P$ is the Zariski closure $N_P=\overline{\rho ( \Lambda )}^Z$.
\end{enumerate}

\medskip

We need the following analogue of Claim \ref{12}:

\begin{clm}
For a finitely generated group $\gD$, and a convergent sequence of 
homomorphisms $f_n:\gD\to G$, the property ``$f_n(\gD )$ is contained in a 
connected unipotent group of dimension $\leq m$'' is preserved by taking a limit.
\end{clm}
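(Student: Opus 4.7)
The plan is to parametrize the connected unipotent subgroups of $G$ of dimension $d$ by their Lie algebras, which are nilpotent Lie subalgebras of $\mathrm{Lie}(G)$ of dimension $d$, and then to exploit compactness of a Grassmannian together with the fact that both the ``Lie subalgebra'' condition and the ``nilpotent'' condition are closed. Recall that on the nilpotent cone of $\mathrm{Lie}(G)$ the exponential map is a polynomial diffeomorphism onto the unipotent variety of $G$, and it sets up a bijection between connected unipotent subgroups $U\leq G$ of dimension $d$ and nilpotent Lie subalgebras $\mathfrak{u}\subset\mathrm{Lie}(G)$ of dimension $d$.

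Fix a finite generating set $\gS$ of $\gD$, and for each $n$ let $U_n\leq G$ be a connected unipotent subgroup of dimension $d_n\leq m$ containing $f_n(\gD)$. After passing to a subsequence I may assume $d_n=d$ is constant, and that the corresponding nilpotent subalgebras $\mathfrak{u}_n\subset\mathrm{Lie}(G)$ converge in the Grassmannian $\mathrm{Gr}(d,\mathrm{Lie}(G))$ to some $d$-dimensional subspace $\mathfrak{u}_\infty$. Since the bracket is continuous bilinear and convergence in the Grassmannian amounts to convergence of the orthogonal projectors (with respect to any fixed inner product), any $X,Y\in\mathfrak{u}_\infty$ can be approximated by $X_n,Y_n\in\mathfrak{u}_n$, giving $[X,Y]=\lim[X_n,Y_n]\in\mathfrak{u}_\infty$; so $\mathfrak{u}_\infty$ is a Lie subalgebra. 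Nilpotency of a Lie subalgebra of dimension $d$ is equivalent to the vanishing of the $d$-fold iterated bracket, which is likewise a closed condition, so $\mathfrak{u}_\infty$ is nilpotent. Let $U_\infty:=\exp(\mathfrak{u}_\infty)$ be the associated connected unipotent subgroup, of dimension $d\leq m$.

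To conclude I would show that the limit homomorphism $f$ takes $\gD$ into $U_\infty$. For each $\gc\in\gS$ the element $f_n(\gc)$ lies in $U_n$ and is therefore unipotent; it converges to $f(\gc)$, which must itself be unipotent since ``all eigenvalues equal $1$'' is a closed condition. As $\log$ is polynomial on the unipotent variety, the elements $X_n:=\log f_n(\gc)\in\mathfrak{u}_n$ converge to $X:=\log f(\gc)$, and the Grassmannian convergence $\mathfrak{u}_n\to\mathfrak{u}_\infty$ forces $X\in\mathfrak{u}_\infty$, hence $f(\gc)=\exp(X)\in U_\infty$. Since $U_\infty$ is a group and $\gS$ generates $\gD$, I conclude $f(\gD)\subset U_\infty$, as required.

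The main obstacle, modest as it is, is to justify carefully that the nilpotency condition is closed within the Grassmannian of $d$-planes: one uses that any nilpotent Lie subalgebra of dimension $\leq d$ has nilpotency class $<d$, so it suffices to express nilpotency via the vanishing of a single multilinear bracket expression of bounded length. Given the uniform dimension bound provided by the hypothesis, everything else reduces to the polynomial nature of $\exp$ and $\log$ on the nilpotent/unipotent varieties, exactly paralleling the role played by compact abelian groups in the proof of Claim \ref{12}.
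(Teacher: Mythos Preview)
Your approach is essentially the one the paper has in mind: the paper merely says the claim ``can be easily proved using the fact that for connected unipotent groups the exponential map is a diffeomorphism,'' and you have supplied the details via a Grassmannian compactness argument. However, there is one genuine slip.

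You assert a bijection between connected unipotent subgroups of $G$ and \emph{nilpotent} Lie subalgebras of $\mathfrak{g}=\mathrm{Lie}(G)$, and then verify that $\mathfrak{u}_\infty$ is nilpotent in the sense that the $d$-fold iterated bracket vanishes. That is not the right condition: a Cartan subalgebra of $\mathfrak{g}$ is abelian, hence nilpotent in your sense, yet it exponentiates to a torus, not to a unipotent group. What is actually needed for $U_\infty=\exp(\mathfrak{u}_\infty)$ to be unipotent is that every element of $\mathfrak{u}_\infty$ be \emph{ad-nilpotent in $\mathfrak{g}$} (equivalently, after realizing $G$ linearly via $\mathrm{Ad}$, a nilpotent matrix). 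Fortunately the fix is immediate and fits your scheme perfectly: ad-nilpotency of $X$ is the closed polynomial condition $(\mathrm{ad}\,X)^{\dim\mathfrak{g}}=0$, so it passes to the Grassmannian limit exactly as the subalgebra condition does. By Engel's theorem such a subalgebra is then automatically nilpotent as an abstract Lie algebra, so your bracket-vanishing check becomes redundant rather than wrong.

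With this correction your argument goes through unchanged and is precisely in the spirit of the paper's one-line hint.
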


This claim can be easily proved using the fact that for connected unipotent groups the exponential 
map is a diffeomorphism.

\medskip

It follows that if $\rho$ is sufficiently small:

\begin{enumerate}
\item
$\dim (N_P)\geq\dim (N)$,
\item
$P$ must be conjugate to a parabolic which contains $B$. 
\end{enumerate}

The second statement follows from the fact that $\gC_0$ is a an arithmetic subgroup and hence
a lattice in $MN$ and in particular its Zariski closure contain $M'N$, where $M'$ is the product of all
non-compact factors of $M$. 
Hence, when $\rho$ is small enough, $P$ must contain a conjugate of $M'N$, but this 
implies that it contains a conjugate of $B$.

Since the inclusion relation is opposite for parabolic subgroups and for their
unipotent radicals, these two facts together imply:

\begin{cor}
If $\rho$ is sufficiently small, $P$ is conjugate to $B$.
\end{cor}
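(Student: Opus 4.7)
The plan is to chase through the two displayed facts together with the standard order-reversing correspondence between parabolic subgroups and their unipotent radicals. Since conjugation of $P$ does not change the corollary's conclusion, I would first invoke fact (2) and replace $P$ by a suitable conjugate so that $B\subset P$.

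Next, I would use the classical fact that for parabolic subgroups of a connected semisimple Lie group, inclusion reverses on unipotent radicals: if $Q_1\subset Q_2$ are parabolic, then $N_{Q_2}\subset N_{Q_1}$. (This follows, for instance, from writing parabolics as stabilizers of flags in the spherical Tits building, or from the classification in terms of subsets of simple roots.) Applying this to $B\subset P$ gives $N_P\subset N_B=N$, and in particular $\dim(N_P)\leq\dim(N)$. Combined with fact (1), which says $\dim(N_P)\geq\dim(N)$, we obtain equality. Since both $N_P$ and $N$ are connected (unipotent radicals of parabolics are connected), the containment $N_P\subset N$ together with equality of dimensions forces $N_P=N$.

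Finally, I would use the standard fact that a parabolic subgroup $Q$ of a connected semisimple Lie group is recovered from its unipotent radical as $Q=N_G(N_Q)$. This yields $P=N_G(N_P)=N_G(N)=B$, so after undoing the conjugation, the original $P$ is conjugate to $B$, as desired.

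I do not anticipate a real obstacle here: the nontrivial work has already been absorbed into the two bulleted facts preceding the corollary (the dimension estimate, via the Zariski-density argument for $\rho(\Lambda)\subset N_P$, and the containment of a conjugate of $B$ in $P$, via arithmeticity of $\gC_0$ together with $\rho(\gC_0)\subset P$). The remaining step is purely structural Lie theory of parabolics; the only point to state explicitly is the order-reversal and the normalizer characterization, both of which are classical.
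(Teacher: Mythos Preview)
Your argument is correct and is exactly the paper's approach, spelled out in more detail: the paper simply says ``since the inclusion relation is opposite for parabolic subgroups and for their unipotent radicals, these two facts together imply'' the corollary. You have just made explicit the dimension-counting step $N_P\subset N$ with $\dim N_P\geq\dim N$ forcing $N_P=N$, and the normalizer characterization $P=N_G(N_P)$ that recovers $P=B$.
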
 

Since the canonical map $G\to G/B$ is continuous and open, we have $P=g^{-1}Bg$ for some $g\in G$ 
close to $1$, and by replacing $\rho$ by its conjugation with $g$, $\rho^g$ (which is still a small 
deformation of $\rho_0$), we may assume that $P=B$.

\medskip

We conclude that

\begin{clm}
The group $\rho^g( \Lambda )$ is a (cocompact) lattice in $N$.
\end{clm}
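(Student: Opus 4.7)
The plan is to apply the uniform-lattice deformation result (Proposition \ref{GWT}) to the restriction of $\rho^g$ to $\Lambda$, viewed as a deformation inside the simply connected nilpotent Lie group $N$.

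First I would observe that $\Lambda = \Gamma \cap N$ is a \emph{uniform} lattice in $N$: by arithmeticity $\Lambda$ is a lattice in $N$, and every lattice in a simply connected nilpotent Lie group is automatically cocompact (Malcev). Next, by Proposition \ref{unipotent} we know $\rho(\Lambda)$ is unipotent, and after conjugation by the element $g\in G$ close to $1$ (chosen so that $P = g^{-1}Bg = B$, i.e.\ $N_P = N$), we have $\rho^g(\Lambda)\subset N_P = N$. Since $g$ is close to $1$ and $\rho$ is close to $\rho_0$, the conjugate $\rho^g$ is again a small deformation of $\rho_0$ in $\mathcal{R}(\Gamma,G)$; restricting to a finite generating set of $\Lambda$, each generator is sent close to itself in $G$. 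Because the images actually lie in the closed subgroup $N$, this is the same as being close in the pointwise convergence topology of $\mathcal{R}(\Lambda,N)$. Thus $\rho^g|_\Lambda$ is a small deformation of the inclusion $\Lambda \hookrightarrow N$ in $\mathcal{R}(\Lambda, N)$.

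Now I would apply Proposition \ref{GWT} with the connected Lie group $N$ in the role of $G$ and the uniform lattice $\Lambda$ in the role of $\Gamma$. The conclusion is exactly that, for $\rho$ small enough, $\rho^g(\Lambda)$ is again a uniform lattice in $N$ and $\rho^g:\Lambda \to \rho^g(\Lambda)$ is an isomorphism. This is the content of the claim. (As a sanity check, this is consistent with the equality $\overline{\rho(\Lambda)}^Z = N_P = N$, since a lattice in a simply connected nilpotent group is automatically Zariski-dense.)

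The only subtle point, and the one I would write out most carefully, is the passage from ``small in $\mathcal{R}(\Gamma,G)$'' to ``small in $\mathcal{R}(\Lambda,N)$''. Once it is verified that the restriction of $\rho^g$ to a fixed finite generating set of $\Lambda$ approaches the inclusion in the topology of $N$ (which is immediate from the pointwise convergence definition together with $\rho^g(\Lambda)\subset N$), Proposition \ref{GWT} delivers the conclusion with no further work, and the claim follows.
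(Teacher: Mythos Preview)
Your proposal is correct and follows essentially the same approach as the paper: the paper's proof simply observes that $\rho^g$ induces a small deformation of the cocompact lattice $\Lambda$ inside $N$ and then invokes either Mal'cev theory or the Ehresmann--Thurston principle (i.e.\ Proposition~\ref{GWT}), which is exactly what you do. Your write-up is in fact more detailed than the paper's, especially in justifying the passage from closeness in $\mathcal{R}(\Gamma,G)$ to closeness in $\mathcal{R}(\Lambda,N)$.
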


This is a subsequent of the fact that $\rho^g$ induces a small deformation of 
$\Lambda$ (a cocompact lattice) in $N$ together with either some classical theorems on lattices of nilpotent 
Lie groups (Ma\'lcev theory \cite{malcev}) or, 
more simply, Ehresmann-Thurston's principle. 

The next step is:
 
\begin{clm}
The group $\rho (\Gamma_0 )$ is contained in $M_P N_P$.
\end{clm}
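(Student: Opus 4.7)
The plan is to combine the normality of $\Lambda$ in $\Gamma_0$, the already-established fact that $\rho(\Lambda)$ is a cocompact lattice in $N_P$, and the characterization of $M_PN_P$ recalled in the text as the subgroup of elements of $P$ acting unimodularly on $\mbox{Lie}(N_P)$ via the adjoint representation (equivalently, preserving horospheres, equivalently preserving Haar measure on $N_P$).

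First I would observe that, since $\Lambda = \Gamma\cap N$ is normal in $\Gamma_0 = \Gamma\cap B$, we have $\rho(\gamma)\rho(\Lambda)\rho(\gamma)^{-1} = \rho(\Lambda)$ for every $\gamma\in\Gamma_0$. Because $N_P$ is normal in $P$ and $\rho(\gamma)\in P$, conjugation by $\rho(\gamma)$ restricts to an automorphism $\alpha_\gamma$ of the simply connected nilpotent Lie group $N_P$, and this automorphism preserves the uniform lattice $\rho(\Lambda)$ sitting inside it.

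The key step is then the elementary observation that any automorphism $\alpha$ of a simply connected nilpotent Lie group $H$ scales Haar measure by the factor $|\det(d\alpha|_{\mbox{Lie}(H)})|$ (as one sees via the exponential diffeomorphism, which intertwines the automorphism with its linear differential on $\mbox{Lie}(H)$ modulo lower-order nilpotent corrections preserving Lebesgue measure). If $\alpha$ sends a uniform lattice to itself, then the covolume of the image equals the covolume of the original, forcing this determinant to have absolute value $1$. Applied to $\alpha_\gamma$, this yields $|\det(\mbox{Ad}(\rho(\gamma))|_{\mbox{Lie}(N_P)})| = 1$, which is exactly the condition that places $\rho(\gamma)$ inside $M_PN_P$.

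I do not expect a real obstacle here: once $\rho(\Lambda)$ is known to be a lattice in $N_P$ (which was established in the preceding claim), the statement follows from the covolume/unimodularity argument above combined with the characterization of $M_PN_P$ explicitly recalled earlier in the text. The only mildly subtle point is that unimodularity is automatic for the $\Lambda$-direction itself (every element of $N_P$ is unimodular), so the real content is to promote it from $\rho(\Lambda)$ to the full group $\rho(\Gamma_0)$, which is exactly what the normalizer/lattice-preservation argument accomplishes.
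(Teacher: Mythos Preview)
Your argument is correct and is precisely the paper's approach, just spelled out in more detail: the paper's one-line proof says that $\rho(\Gamma_0)$ normalizes the cocompact lattice $\rho(\Lambda)$ of $N_P$ and hence $\mbox{Ad}\big(\rho(\Gamma_0)\big)$ acts unimodularly on $\mbox{Lie}(N_P)$, which is exactly the covolume-preservation argument you have written out. Your added explanation of why lattice-preservation forces $|\det(\mbox{Ad}(\rho(\gamma))|_{\mbox{Lie}(N_P)})|=1$ is the content the paper leaves implicit.
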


This is because the group $\rho (\Gamma_0 )$ normalizes the cocompact lattice $\rho (\gL )$ of 
$N_P$, and hence $\mbox{Ad}\big(\rho (\Gamma_0 )\big)$ acts unimodularly on $\mbox{Lie}(N_P )$.

\medskip

As noted above $\Gamma_0$ is a lattice in $M N$. Moreover, the projection of 
$\Gamma_0$ in $M$ is a lattice in $M$, 
let's denote this lattice by $L$. The morphism $\rho^g$, composed with the projection $M N\to M$, 
induces a small deformation $\tilde{\rho^g}(L)$
of $L$ in $M$. Moreover, by Theorem \ref{L-M-R} and Corollary \ref{U is unip}, $\rho^g$ and hence
$\tilde{\rho^g}$ takes unipotents to unipotents. Hence, by an induction procedure on the real rank,
starting with rank one, of which we already took care in the previous section, we conclude 
that:

\begin{clm}
The group $\tilde{\rho^g}(L)$ is a lattice in $M$.
\end{clm}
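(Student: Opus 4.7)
The plan is to argue by induction on the real rank of $G$, the rank-one base case having been settled in Section \ref{rank-1}. The key observation is that the Langlands decomposition $B=MAN$ exhibits $M$ as a reductive group whose semisimple part has real rank strictly less than $\mbox{rank}(G)$, since the split torus $A$ contributes at least one positive-dimensional direction to $G$ but not to $M$. Hence, provided three conditions are met, the inductive hypothesis applies to the triple $(M,L,\tilde{\rho^g})$: (i) $\tilde{\rho^g}$ is close to the inclusion $L\hookrightarrow M$; (ii) $L$ is an arithmetic lattice in (the semisimple part of) $M$; and (iii) $\tilde{\rho^g}$ sends unipotents of $L$ to unipotents of $M$.

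Condition (i) follows at once from continuity of the projection $\pi : MN\to M$ together with the smallness of $\rho^g$. Condition (ii) is a consequence of arithmeticity of $\gC$: the subgroup $\gC_0=\gC\cap B$ is an arithmetic subgroup of the $\BQ$-parabolic $B$, and its projection $L=\pi(\gC_0)$ along the $\BQ$-unipotent radical $N$ is an arithmetic lattice in $M$ because $\gC_0$ is cocompact in $MN$. For condition (iii), I would reapply the machinery of Theorem \ref{L-M-R} and Corollary \ref{U is unip} to $L$: any unipotent in $L$ is a $U$-element in $L$, its image under $\tilde{\rho^g}$ is a $U$-element in $\tilde{\rho^g}(L)$, the eigenvalues are therefore roots of unity, and a continuity/deformation argument along an arc in $\mathcal{R}(L,M)$ forces the image to be unipotent. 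When the induction reaches the rank-one stratum, we instead rely on the weak version of local rigidity proved at the end of Section \ref{rank-1}, which explicitly takes the unipotent-preservation property as a hypothesis and so covers even the exceptional groups $PSL_2(\BR)$ and $PSL_2(\BC)$.

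Once (i)--(iii) are in hand, the inductive hypothesis (or Corollary \ref{compact rigidity}/Proposition \ref{GWT}, should $L$ happen to be uniform in $M$) delivers that $\tilde{\rho^g}(L)$ is a lattice in $M$. The step I expect to be the main obstacle is handling possible reducibility: $M$ may split as a product of several simple $\BR$-factors, and $L$ need not be irreducible with respect to this decomposition, nor need its projection to each factor automatically be a lattice there. The remedy I have in mind is to argue factor by factor, projecting $L$ into each simple factor, checking arithmeticity and the unipotent-preservation property in each factor, and invoking either the induction or Weil's theorem as appropriate; compact factors of $M$ contribute nothing to the rank and are absorbed into the Riemannian-homogeneous argument behind Proposition \ref{GWT}. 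A secondary technical point is to ensure that the subgroup of finite index chosen by Selberg's lemma behaves well under all these reductions, but this is a standard maneuver already used in the proof of Corollary \ref{compact rigidity}.
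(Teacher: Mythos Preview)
Your inductive framework matches the paper's: reduce to the Levi factor $M$, verify that $\tilde{\rho^g}$ is a small unipotent-preserving deformation of the arithmetic lattice $L$, and invoke the inductive hypothesis (with Section~\ref{rank-1} furnishing the base). The gap is in your justification of condition~(iii). You propose to apply Theorem~\ref{L-M-R} to the pair $(L,M)$, deducing that unipotents of $L$ are $U$-elements of $L$ and then running the root-of-unity/continuity argument inside $\mathcal{R}(L,M)$. But Theorem~\ref{L-M-R} requires the ambient semisimple group to have real rank $\geq 2$ and the lattice to be irreducible; neither is guaranteed for $(L,M)$. The Levi $M$ may be rank one, or a product of rank-one and compact factors, and $L$ is typically reducible. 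Your factor-by-factor remedy does not close this gap: on a rank-one factor you still need unipotent preservation as an \emph{input} to the weak-rigidity lemma, and you have produced no independent source for it there.

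The paper sidesteps this by applying Theorem~\ref{L-M-R} and Corollary~\ref{U is unip} only once, at the top level $(\gC,G)$, where the hypotheses of higher rank and irreducibility are part of the standing assumptions. This gives that $\rho^g$ sends unipotents of $\gC$ to unipotents of $G$. Since an element of $MN$ is unipotent in $G$ if and only if its image under the projection $MN\to M$ is unipotent in $M$, the induced map $\tilde{\rho^g}$ inherits unipotent preservation automatically. With condition~(iii) secured in this way the induction goes through, and your concern about reducibility of $L$ becomes harmless: the statement being inducted on is for arithmetic (possibly reducible) lattices equipped with a unipotent-preserving small deformation, and products are handled factorwise in the obvious manner.
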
 

Since the Haar measure of $MN$ is the product of the Haar measures of $M$ and of $N$, 
and since $\gL$ is a cocompact lattice in $N$, we get:

\begin{cor}
The group $\rho (\gC_0 )$ is a lattice in $MN$.
\end{cor}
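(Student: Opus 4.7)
The plan is to combine the two previous claims with the product structure of Haar measure on $MN$. First I would record that after conjugation we have the commutative diagram of short exact sequences
$$
\begin{CD}
1 @>>> \Lambda @>>> \gC_0 @>>> L @>>> 1 \\
@. @VV\rho^g V @VV\rho^g V @VV\ti{\rho^g}V \\
1 @>>> N @>>> MN @>>> M @>>> 1,
\end{CD}
$$
where the vertical maps on the sides are the ones already shown to be injective with images respectively a cocompact lattice $\rho^g(\Lambda)\leq N$ and a lattice $\ti{\rho^g}(L)\leq M$.

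Next I would establish discreteness of $\rho^g(\gC_0)$ in $MN$. The projection $MN\to M$ sends $\rho^g(\gC_0)$ into the discrete group $\ti{\rho^g}(L)$, and its kernel intersected with $\rho^g(\gC_0)$ coincides with $\rho^g(\Lambda)$ (here one uses that for $\rho$ close to $\rho_0$ the induced map $L\to M$ is injective, so the only elements of $\gC_0$ mapping into $N$ under $\rho^g$ are those in $\Lambda$). Since an extension of a discrete group by a discrete group is discrete, $\rho^g(\gC_0)$ is discrete.

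Finally, for the finite covolume, I would invoke the product decomposition of the Haar measure on $MN$ (which is valid precisely because $M$ acts unimodularly on $\mbox{Lie}(N)$, a defining feature of the subgroup $MN\subset B$). Choosing a Borel fundamental domain $F_M$ for $\ti{\rho^g}(L)$ in $M$ and, over each $m\in F_M$, the translate by a lift of $m$ of a fundamental domain $F_N$ for $\rho^g(\Lambda)$ in $N$, yields a fundamental domain $F$ for $\rho^g(\gC_0)$ in $MN$ whose volume factors as
$$
 \vol\bigl(MN/\rho^g(\gC_0)\bigr) = \vol\bigl(M/\ti{\rho^g}(L)\bigr)\cdot\vol\bigl(N/\rho^g(\Lambda)\bigr),
$$
which is finite.

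The main obstacle I anticipate is the bookkeeping in the identification $\rho^g(\gC_0)\cap N=\rho^g(\Lambda)$: one must verify that no element of $\gC_0\setminus\Lambda$ gets accidentally mapped into $N$ by a small deformation. This is where the injectivity of $\ti{\rho^g}$ on $L$, provided by the inductive hypothesis, is used in an essential way; once this is checked, the Fubini-style volume computation is routine.
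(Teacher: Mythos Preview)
Your proposal is correct and follows essentially the same approach as the paper. The paper's argument is a single sentence invoking the product decomposition of the Haar measure of $MN$ into the Haar measures of $M$ and $N$, together with the previously established facts that $\rho^g(\Lambda)$ is a cocompact lattice in $N$ and $\tilde{\rho^g}(L)$ is a lattice in $M$; you have simply unpacked this standard lattice-in-extension argument (as in Raghunathan, Lemma~1.6) by making the exact sequence, the discreteness step, and the Fubini volume computation explicit, and by noting where injectivity of $\tilde{\rho^g}$ enters.
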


\medskip

To conclude the proof, we now need, as in the rank one case, to introduce a notion of cusp, but this 
time it will depend on
the group $\Gamma$. By \cite{Leuzinger}, theorem 4.2, on the locally symmetric manifold
$M= \Gamma \backslash X$, there exists a continuous and piecewise real analytic exhaustion 
function $h:M \rightarrow [0, \infty )$ such that, for any $s\geq 0$, the sub-level set 
$V(s) : = \{ h \leq s \}$ is a compact submanifold with corners of $V$. Moreover, 
the boundary of $V(s)$, which is a level set of $h$, consists of projections of subsets of 
horospheres in $X$. 

We can choose a sufficiently large $s_0$ in order to ensure that for any $s > s_0$, 
$V(s)-V(s_0 )$ is a collar neighborhood of the boundary of $V(s_0 )$ inside $V(s)$.
In the following we let $M_0 = V(s)$ for some real $s > s_0$.

By the Ehresmann-Thurston principle, there is a $(G,X)$-structure $M_0 '$ on $M_0$ whose holonomy is 
$\rho$ and which induces a new $(G,X)$-structure on $V(s)-V(s_0 )$. 
The fundamental group of $V(s)-V(s_0 )$ is a finite complex of groups $\mathcal{C}$ where each 
simplex group is the intersection $\Gamma \cap M_P N_P$ for some $\BQ$-parabolic subgroup 
$P$ of $G$ and the inclusion order for simplices is the reversed inclusion order for the 
corresponding parabolic groups. In particular the vertex groups correspond to the $\gC$-conjugacy
classes of maximal $\BQ$-parabolics. 

The proof of proposition 3.3 in \cite{Leuzinger} implies that any finite complex of groups as 
above leaves invariant a countable union of horospheres of $X$ (level sets of Busemann functions) 
centered at the points of the ideal boundary of 
$X$ which corresponds to the vertices of the complex. In fact, Leuzinger proves that 
the developing map of $M_0$ embeds $\tilde{M}_0$ in $X$ as the complement $X(s)$ of 
countable union of open horoballs.
These horoballs are disjoint if and only if $\Gamma$ is an arithmetic subgroup of a $\BQ$-$\mbox{rank}$ 
one group. The projection 
$\pi:X \rightarrow M$ maps $X(s)$ to $V(s)$ which is a compact manifold with corners whose 
fundamental group is isomorphic to 
$\Gamma$. The image of the developing map of $V(s) -V(s_0 )$ with the $(G,X)$-structure induced 
from $M_0$ is $X(s) -X(s_0 )$. It is a $\pi_1 (V(s) -V(s_0 ))$-invariant subset of $X$.

The image $Y$ of the developing map of $V(s)-V(s_0 )$ with the $(G,X)$-structure
induced from $M_0 '$ is a subset of $X$ invariant under $\rho \big(\pi_1 (V(s)-V(s_0 )\big)$ which converges toward
$X(s)-X(s_0 )$ on every compact subset as $\rho$ tends to $\rho_0$.
According to the above claims, when $\rho$ is sufficiently small,
the group $\rho \big(\pi_1 \big( V(s)-V(s_0 )\big)\big)$ is still a complex of 
groups isomorphic to $\mathcal{C}$, and each simplex of the new complex is associated with a 
parabolic group $B'$ of the same type as the parabolic $B$ associated to the
corresponding simplex of $\mathcal{C}$. Moreover $\rho :(\gC\cap B)\to(\rho (\gC )\cap B')$ is an 
isomorphism, its image $\rho (\gC )\cap B'$ is contained in $M_{B'} N_{B'}$ and it is a lattice 
there. The group $\rho \big(\pi_1\big(V(s) -V(s_0 )\big)\big)$ thus leaves invariant a countable 
family of horospheres as above. More 
precisely, we get that for $\rho$ sufficiently close to $\rho_0$, the image $Y$ of the developing 
map of $V(s) - V(s_0)$ with the $(G,X)$-structure induced from $M_0 '$ contains the subset of $X$ 
which is bounded between two families of horospheres as above. By adding
the corresponding countably many horoballs, $Y$ can be completed by a 
$\rho\big(\pi_1\big(V(s) -V(s_0 )\big)\big)$-invariant subset of $X$, whose boundary is one part 
of the boundary of $Y$ with the reversed orientation and 
whose quotient $C$ by $\rho\big(\pi_1\big(V(s)-V(s_0)\big)\big)$ has a finite volume. 
As the restriction of 
the $(G,X)$-structures from $M_0 '$ and from $C$ coincide on $V(s)-V(s_0 )$, these two sets can 
be glued along $V(s) -V(s_0 )$ in order to yield a complete $(G,X)$-manifold $M'$ whose 
fundamental 
group is $\rho (\Gamma )$. Note that $M'$ has finite volume, being a union of a compact set and a 
finite volume set. 
The finiteness of the volume of each of the (finitely many) parts composing the ``cusp'' 
$M'\setminus M_0'$ can be proved in the same way as one proves the finiteness of the volume of 
an ordinary Siegel set. Namely, by expressing the Riemannian measure of $X$ in terms of the 
Haar measures of $M_P, N_P$ and $A_P$, and using Fubiny theorem, integrating the function $1$
first along the horosphere corresponding to $P$ in $V(s_0)$ (here we get a finite number since
$\rho (\gC )\cap M_PN_P$ is a lattice in $M_PN_P$), and then in the directions of the end - 
the simplex of the Tits' building which corresponds to $P$. Note that the Riemannian measure of $X$
is obtained from the product measure of the corresponding horosphere and the Haar measure of $A_P$
by multiplying by a factor which tends to zero exponentially fast as one moves toward the end.

\medskip

\begin{footnotesize}
Nicolas Bergeron: Unit\'e Mixte de Recherche 8628 du CNRS, Laboratoire de Math\'ematiques, B\^at. 425, Universit\'e 
Paris-Sud, 91405 Orsay Cedex, France \\
E-mail address: \texttt{Nicolas.Bergeron@math.u-psud.fr} \\

Tsachik Gelander: Dept. of Math, Yale University, 10 Hillhouse Ave, New Haven CT 06520, USA \\
E-mail address: \texttt{tsachik.gelander@yale.edu}

\end{footnotesize}

\end{document}